\newtheorem{theorem}{Theorem}[section]
\newtheorem{lemma}[theorem]{Lemma}
\newtheorem{corollary}[theorem]{Corollary}
\theoremstyle{definition}
\newtheorem{definition}[theorem]{Definition}
\newtheorem{rem}[theorem]{Remark}
\newtheorem{example}[theorem]{Example}
\numberwithin{equation}{section}
\newcommand{\Z}{\mathbb{Z}}
\newcommand{\tit}{\textit}
\begin{document}

\title{ Virtual Cactus Group is Virtually Special Compact}
\author{Maninder S. Dhanauta}
\date{}

\begin{abstract}
    We prove that the virtual cactus group has a finite index subgroup that is the fundamental group of a compact special cube complex.
\end{abstract}

\maketitle

\section{Introduction}


\subsection{The cactus group and the virtual cactus group}

The pure cactus group  is the fundamental group of the Deligne-Mumford compactification of the moduli space of genus 0 real curves with $n+1$ marked points. Devadoss and Kapranov tiled the Deligne-Mumford space by associahedra~\cite{Dev99}~\cite{Kap93}. Davis--Januszkiewicz--Scott showed that the structure dual to the tiling was a CAT(0) cube complex and studied a more general class of ``mock reflection groups'' which acted on cubulated manifolds~\cite{DJS03}. A more succinct and general definition was provided by Scott. A \tit{right-angled mock reflection group} is any group acting isometrically on a CAT(0) cube complex such that the action is simply-transitive on the 0-skeleton, and the stabilizer of every edge is isomorphic to
 $\Z/2\Z$~\cite{Sco08}. In recent years, Genevois used tools from geometric group theory to study the cactus group ~\cite{Gen22}.

Denote $[n] = \{ 1, \dots, n\}$ and $[i,j] = \{i,i+1,\dots, j\}$ where $i<j$ where $i,j \in \mathbb{N}$. The \tit{cactus group}~$C_n$ is generated by $\{ s_{ij} \mid 1\leq i < j \leq n \}$ and has relations
    \begin{enumerate}
        \itemsep0em 
        \item $s_{ij}^2 = 1$ 
        \item $s_{ij}s_{kl} = s_{kl}s_{ij}$ if $[i,j] \cap [k,l] = \varnothing$
        \item $s_{ij}s_{kl} = s_{w_{ij}(l)w_{ij}(k)}s_{ij}$ if $ [k,l] \subset [i,j]$
    \end{enumerate}
    for all $i<j, k<l$ where $w_{ij} $ is the element of the symmetric group $S_n$ which reverses the interval $[i,j]$ and acts as the identity on the elements outside of the interval. 

    The \tit{virtual cactus group} $vC_n$ is the quotient of the free product $C_n*S_n$ by the relations
    $$ws_{ij}w^{-1} = s_{w(i)w(j)}$$    
    for all $1\leq i < j \leq n$ and $w\in S_n$ such that $w(i+k) = w(i)+k$ for $k=0,\dots,j-i$. This group is introduced in~\cite{IKLPR23}*{Section 10.5} and contains a copy of the cactus group.

Define a homomorphism $C_n \to S_n$ that maps generators $s_{ij}$ to $w_{ij}$. There is a homomorphism $vC_n \to S_n$ which extends the previous homomorphism and is the identity on the elements of $S_n$. To see that this is indeed a homomorphism, one can verify that the relation $ww_{ij}w^{-1} = w_{w(i)w(j)}$ holds in $S_n$. 
The \tit{pure cactus group} and the \tit{pure virtual cactus group}, denoted $PC_n$ and $PvC_n$ respectively, are the kernels of their respective maps. These are finite index subgroups.

Genevois studied the Cayley graph of the cactus group $C_n$, which is the 1-skeleton of a CAT(0) cube complex ~$\widetilde{D}_n$, and showed that the action of $PC_n$ on $\widetilde{D}_n$ is special~\cite{Gen22}*{Theorem 2.7, Theorem 4.2}. In view of~\cite{HW08}*{Lemma~9.12}, the quotient $D_n$ of $\widetilde{D_n}$ under this action is a special cube complex. In particular, the cactus group is virtually special compact. We will provide all relevant definitions for cube complexes in  Section~\ref{sec:definitions}.

The pure virtual cactus group $PvC_n$ is the fundamental group of a non-positively curved~(NPC) cube complex~$\widehat{D}_n$~\cite{IKLPR23}*{Section~8}. The main results of the paper are as follows.

\begin{theorem}
    The cube complex $\widehat{D}_n$ has a finite degree cover, denoted $M_n$, which is a compact, special cube complex. 
    \label{thm:Mn-special}
\end{theorem}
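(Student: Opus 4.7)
The plan is to construct $M_n$ as a finite cover of $\widehat{D}_n$ associated to a suitable finite index subgroup of $PvC_n$, and to reduce the specialness of $M_n$ to the already-established specialness of the cube complex $D_n$ from Genevois' theorem. The natural candidate subgroup is $PC_n$, sitting inside $PvC_n$ via the inclusion $C_n \hookrightarrow vC_n$, so that the associated cover of $\widehat{D}_n$ will be closely related to $D_n$.

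First, I would verify that $PC_n$ has finite index in $PvC_n$. The map $vC_n \to S_n$ is split by the obvious inclusion $S_n \hookrightarrow vC_n$, and the virtual relations $ws_{ij}w^{-1} = s_{w(i)w(j)}$ indicate that conjugation by $S_n$ permutes cactus generators in a controlled fashion. These two observations together should give a group-theoretic decomposition of the form $vC_n = PvC_n \rtimes S_n$, and an analysis of the resulting normal structure should yield that $[PvC_n : PC_n]$ is finite (most likely equal to $[vC_n : C_n]$, which is at most $n!$ by the splitting).

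Second, I would analyze the universal cover $\widetilde{\widehat{D}_n}$ of $\widehat{D}_n$ together with the $PvC_n$-action, comparing it to the $C_n$-action on $\widetilde{D}_n$. If $\widetilde{\widehat{D}_n}$ can be identified with $\widetilde{D}_n$, with $vC_n$ acting as an extension of the $C_n$-action by cubical $S_n$-symmetries, then the cover of $\widehat{D}_n$ associated to $PC_n \le PvC_n$ is exactly $D_n$, which is compact and special, and we are done. Otherwise, I would verify the four Haglund--Wise conditions on hyperplanes (two-sidedness, no self-intersection, no self-osculation, no inter-osculation) directly on the $PC_n$-cover of $\widehat{D}_n$.

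The main obstacle will be the identification of $\widetilde{\widehat{D}_n}$ with $\widetilde{D}_n$, or the precise characterization of how they differ. The virtual generators coming from $S_n$ may introduce additional cubes, attaching maps, or hyperplane carriers not present in the cactus setting, potentially producing self-osculation (the most likely pathology, arising when a permutation identifies two sides of a hyperplane projected from $\widetilde{D}_n$) or inter-osculation of hyperplanes in the $PC_n$-cover. Resolving these pathologies will likely require passing to a further characteristic finite-index subgroup, either through an explicit coloring of hyperplanes compatible with the $S_n$-action, or by invoking the canonical completion and retraction machinery of Haglund--Wise to produce a cover in which all hyperplane pathologies disappear.
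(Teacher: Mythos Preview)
Your approach fails at the first step: $PC_n$ does \emph{not} have finite index in $PvC_n$. The splitting $vC_n = PvC_n \rtimes S_n$ yields $[vC_n : PvC_n] = n!$, but says nothing about $[vC_n : C_n]$; you have conflated $C_n$ with the normal complement $PvC_n$. Already for $n=2$ one has $C_2 \cong \Z/2\Z$ and $vC_2 = C_2 * S_2 \cong \Z/2\Z * \Z/2\Z$ (no nontrivial virtual relation applies, since the transposition does not satisfy $w(i+k)=w(i)+k$), hence $PvC_2 \cong \Z$ while $PC_2$ is trivial, and the index is infinite. Your second step fails independently: the map $D_n \to \widehat{D}_n$ is not a covering at all. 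The link of the unique $0$-cube in $\widehat{D}_n$ has one vertex for every ordered subset of $[n]$ of size at least $2$, whereas the link of a $0$-cube in $D_n$ has only one vertex for each of the $\binom{n}{2}$ subintervals of the corresponding permutation. Thus $\widetilde{D}_n$ and the universal cover of $\widehat{D}_n$ are different CAT(0) cube complexes, and Genevois' specialness result for $D_n$ cannot be transported.

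The paper instead builds $M_n$ as the cover associated to the homomorphism $PvC_n \to (\Z/2\Z)^L$ that records, for each unordered type $l \in L$, the parity of the number of crossings of the hyperplane of that type; concretely one takes $2^{|L|}$ copies of $\widehat{D}_n$ and reglues so that crossing a type-$l$ hyperplane flips the $l$-coordinate. Embeddedness of hyperplanes in $\widehat{D}_n$ (Lemma~\ref{lemma:dn_selfintersection}) combined with this parity bookkeeping yields two-sidedness and the absence of self-osculation in $M_n$ essentially formally. The substantive work is ruling out inter-osculation (Lemma~\ref{lemma:inter-osc}), which requires a delicate combinatorial argument tracking how nested versus disjoint hyperplane types interact along sequences of midcube reflections. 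Your fallback suggestions---a hyperplane coloring, or canonical completion and retraction---gesture vaguely toward the first of these ingredients but supply none of the actual argument; canonical completion in particular is a tool for embedding special complexes, not for producing special covers of non-special ones.
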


\begin{corollary}
    The virtual cactus group is virtually special compact.
    \label{cor:vcn-vs}
\end{corollary}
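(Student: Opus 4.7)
The plan is to chain together two finite-index containments: the one given by Theorem~\ref{thm:Mn-special}, which produces a compact special cover of $\widehat{D}_n$, and the standard fact recalled earlier in the introduction that $PvC_n$ has finite index in $vC_n$. The corollary then follows from the definition of ``virtually special compact''.

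More concretely, I would proceed as follows. Let $M_n \to \widehat{D}_n$ be the finite degree cover provided by Theorem~\ref{thm:Mn-special}. By covering space theory, $\pi_1(M_n)$ is a subgroup of finite index in $\pi_1(\widehat{D}_n) = PvC_n$. Since $PvC_n$ is the kernel of the surjection $vC_n \to S_n$, it is a finite index subgroup of $vC_n$. Composing these two finite index containments, $\pi_1(M_n)$ sits inside $vC_n$ as a finite index subgroup.

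Because $M_n$ is a compact special cube complex by Theorem~\ref{thm:Mn-special}, the group $\pi_1(M_n)$ is by definition the fundamental group of a compact special cube complex. Hence $vC_n$ contains a finite index subgroup that is the fundamental group of a compact special cube complex, which is precisely the statement that $vC_n$ is virtually special compact.

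There is essentially no obstacle here: once Theorem~\ref{thm:Mn-special} is established, the corollary is a two-line verification, with the only subtlety being the bookkeeping to confirm that the composition of two finite-index inclusions is finite-index. All the real work is contained in the proof of Theorem~\ref{thm:Mn-special}, i.e.\ in constructing the special cover $M_n$ of $\widehat{D}_n$.
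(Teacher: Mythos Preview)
Your argument is correct and matches the paper's own reasoning: the paper simply states that Corollary~\ref{cor:vcn-vs} follows from Theorem~\ref{thm:Mn-special} (via the supporting lemmas), and the deduction you spell out---$\pi_1(M_n)$ has finite index in $\pi_1(\widehat{D}_n)\cong PvC_n$, which in turn has finite index in $vC_n$---is precisely the intended two-line verification.
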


In our construction of the cover, we take two copies of $\widehat{D}_n$,  cut along any hyperplane and glue the two copies along the cuts to get a double cover. Iterating this process for each type of hyperplane, the resulting cover has degree $2^{|L|}$ where $L \subset \mathcal{P}(\{1,\dots,n\})$ contains all non-empty and non-singleton subsets. We show that the hyperplanes in $\widehat{D}_n$ are embedded. As a concequence, we get that $M_n$ has two-sided hyperplanes. We then prove that $M_n$ is a special cube complex. 

From \cite{HW08}, we get the following corollary as a consequences of being virtually special. 

\begin{corollary}
  The virtual cactus group is $\mathbb{Z}$-linear.
\end{corollary}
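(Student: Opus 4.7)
The plan is to chain together three well-known facts. First, by Corollary~\ref{cor:vcn-vs} the virtual cactus group $vC_n$ contains a finite index subgroup $H$ isomorphic to $\pi_1(M_n)$, where $M_n$ is a compact special cube complex. Second, the fundamental theorem of Haglund--Wise~\cite{HW08} asserts that $\pi_1$ of any compact special cube complex embeds as a subgroup of a right-angled Artin group $A_\Gamma$ associated to some finite simplicial graph $\Gamma$. Third, right-angled Artin groups are known to be $\mathbb{Z}$-linear: they embed in $\mathrm{GL}_N(\mathbb{Z})$ for some $N$, via any of several classical constructions (for example, the faithful representations on the integral cohomology of the Salvetti complex, or Humphries-type embeddings into integer matrices). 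Combining these three facts gives a faithful integral representation of $H$.

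To upgrade from $H$ to the whole group $vC_n$, I would use the standard induction argument: if $H \leq G$ has finite index $d$ and $\rho\colon H \to \mathrm{GL}_N(\mathbb{Z})$ is faithful, then the induced representation $\mathrm{Ind}_H^G \rho\colon G \to \mathrm{GL}_{Nd}(\mathbb{Z})$ is again faithful. This is a routine check using coset representatives: an element $g \in G$ in the kernel of the induced representation must fix each coset and act trivially on each $H$-summand, forcing $g \in H$ and then $\rho(g) = 1$, hence $g = 1$. Thus $vC_n$ is $\mathbb{Z}$-linear.

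There is no substantial obstacle here once Corollary~\ref{cor:vcn-vs} is in hand; the result is essentially a packaging of cited theorems. The only point deserving care is ensuring that the target of the RAAG representation is $\mathrm{GL}_N(\mathbb{Z})$ rather than $\mathrm{GL}_N(\mathbb{Q})$ or $\mathrm{GL}_N(\mathbb{C})$, which is why one should cite an explicitly integral faithful representation of $A_\Gamma$ rather than merely its linearity over a field. Given that, the corollary follows in a few lines.
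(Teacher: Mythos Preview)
Your proposal is correct and matches the paper's approach: the paper does not give a detailed proof but simply records this corollary as a direct consequence of Corollary~\ref{cor:vcn-vs} together with the Haglund--Wise machinery in~\cite{HW08}. Your write-up just makes explicit the chain (special $\Rightarrow$ embeds in a RAAG $\Rightarrow$ $\mathbb{Z}$-linear, then induce up to finite index) that the paper leaves as a one-line citation.
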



\section{Definitions}
\label{sec:definitions}

\subsection{Cube complexes, hyperplanes, and specialness}

Special cube complexes and special groups were introduced in~\cite{HW08}. Coxeter groups are examples of virtually special groups~\cite{HW10} while the cactus group is an example of a  virtually special compact group~\cite{Gen22}. We will recall some of the relevant definitions. Refer to ~\cite{Sag14} for a survey of the field.

Consider a collection of cubes $[-1,1]^n$ of various dimensions, each given the euclidean metric. We may glue cubes in this collection along their faces via isometries. The resulting space~$X$ is a \tit{cube complex}.  
Graphs are examples of cube complexes. An $n$-torus, defined as the product $(S^1)^n$, is an example of a cube complex where the opposite faces of an $n$-dimensional cube are glued by translations. 

Consider an $n$-cube $[-1,1]^n$ in $X$. The set of all points in the cube where one coordinate is fixed to be 0 is called a \tit{midcube}. Consider the disjoint union of all midcubes in $X$ and glue them according to the gluings in $X$. The resulting space naturally maps into~$X$. An \tit{(immersed) hyperplane} is the image of a connected component of this resulting space. Note that the restriction of this map to a connected component is not necessarily an embedding. 

If the midpoint of a 1-cube is contained in a midcube, the 1-cube is said to be \tit{dual} to that midcube. The 1-cube is also \tit{dual} to the hyperplane that contains the midcube.

The \tit{link} of a 0-cube $v\in X$, denoted $lk(v)$, is the complex with vertex set being all directed 1-cubes originating from $v$. Any set of $n$ vertices form an $n$-simplex in $lk(v)$ if the corresponding directed 1-cubes span the corner of an $n$-cube in $X$.

A simplicial complex is \tit{flag} if every $n$-clique is contained in an $n$-simplex. A cube complex is \tit{nonpositively curved(NPC)} if the link of every vertex is a flag simplicial complex. Lastly, a CAT(0) \tit{cube complex} is a simply connected, NPC cube complex.


Next, we identify some hyperplane pathologies that may occur in NPC cube complexes.

Suppose a 1-cube $\epsilon_1$ is dual to some hyperplane and is given some orientation. If~$\epsilon_1$ lies in a 2-cube, we orient the 1-cube $\epsilon_2$ that lies on the opposite side in the same direction as~$\epsilon_1$. In this manner, we can extend the orientation to all 1-cubes dual to the hyperplane. If each 1-cube receives a unique orientation, the hyperplane is said to be \tit{two-sided}. We denote an oriented 1-cube as $\vec{\epsilon}$.  

Two hyperplanes $H_1, H_2$ \tit{intersect} if they have dual 1-cubes $\epsilon_1 \neq \epsilon_2$, respectively, that meet at a 0-cube and form a corner of a 2-cube.
Two hyperplanes $H_1, H_2$ \tit{osculate} if they have two oriented dual 1-cubes $\vec{\epsilon_1} \neq \vec{\epsilon_2}$ that originate from the same 0-cube but do not form a corner of a 2-cube.
Two hyperplanes \tit{inter-osculate} if they intersect and osculate.
A hyperplane \tit{self-intersects} if it intersects itself. Equivalently, a hyperplane is \tit{embedded} (does not self-intersect) if the natural map from the quotient space of the midcubes is an injection from the appropriate connected component into the hyperplane. A hyperplane \tit{self-osculates} if it osculates with itself.

With these properties in mind, a cube complex is \tit{special} if it is NPC and satisfies the following conditions,
\begin{itemize}
    \item hyperplanes are two-sided,
    \item hyperplanes do not self-intersect,
    \item hyperplanes do not self-osculate,
    \item hyperplanes do not inter-osculate.
\end{itemize}

A cube complex is \tit{virtually special} if it has a finite degree cover that is a special cube complex. The fundamental group of a special cube complex is said to be \tit{special}. A group~$G$ is said to be \tit{virtually special}  if it has a finite index subgroup that is special. Lastly, a group is  \tit{virtually special compact} if it has a finite index subgroup that is the fundamental group of a compact special cube complex.


\subsection{Planar trees and forests}

We recall the construction of the family of NPC cube complexes,  $\widehat{D}_n$,  introduced in~\cite{IKLPR23}*{Section 8}. 

A \tit{tree} is a connect graph without cycles such that no vertex is contained is exactly 2 edges. A \tit{leaf} is a vertex in a tree that is contained in a single edge. A \tit{rooted} tree is a tree with an identified leaf called the \tit{root}. 
Let edge $e$ contain a vertex $v$. If every path from the root to $v$ contains $e$, we say $e$ is the \tit{descending edge} at $v$. Otherwise $e$ is \tit{ascending at}  $v$. Vertex $v_2$ lies \tit{above} $v_1$ if all paths from $v_2$ to the root pass through $v_1$. Similarly, vertex $v$ lies \tit{above} edge $e$ if all paths from $v$ to the root contain $e$.
 A \tit{planar} tree is a rooted tree with an ordering on the set of ascending edges at each vertex. In other words, it is a rooted tree along with a choice of embedding into the plane (up to isotopy). 
 In a diagram of a planar tree, the order given to the ascending edges at a vertex determines the order in which they appear when going around the vertex in a clockwise manner beginning at the descending edge.
  \textbf{From now on, when we talk about leaves, we exclude the root.} 
 
 A \tit{planar forest labelled by $[n]$} is a sequence of planar trees that have $n$ leaves in total amoung all the trees. Furthermore, each leaf receives a unique label from the set $[n] =\{1,\dots,n\}$.  
An \tit{internal edge} of a planar forest is an edge that is disjoint from the leaves. Equivalently, it is an edge that has at least two leaves above it. 
The set of all internal edges of a planar forest $\tau$ is denoted by $E(\tau)$. 
For all $e \in E(\tau)$, we can visit all the leaves that lie above $e$ by a depth-first search that respects the order of ascending edges at each vertex. As we visit each leaf, the order in which we read the label of each leaf is denoted as $\mathcal{O}(\tau, e)$ and is an ordered subset of $[n]$. If $\tau$ has one internal edge, we denote $\mathcal{O}(\tau)$ to mean the ordered labels above that edge. If $A=(a_1,\dots,a_m)$ is an ordered subset of $[n]$, define $A^r = (a_m,\dots,a_1)$. If $A,B,C$ are disjoint ordered subsets of~$[n]$, denote $ABC$ as the concatenation.

The set of all planar forests labelled by $[n]$ with $k$ internal edges is denoted $PF_n(k)$. Later, these will label $k$-cubes in our cube complex. 

We can define a flipping operation at an internal edge $e \in E(\tau)$ of a forest $\tau$.  
Suppose the edge $e$ is ascending at $v_1$ and descending at $v_2$. We define the forest $r_e{(\tau)}$ to be the result of reversing the order of all ascending edges at each vertex $v'$ equal to or above $v_2$. 

Using the same setup, the deleting operation $d_e(\tau)$ collapses the internal edge $e$ and inserts the ordered set of ascending edges at $v_2$ into the set at $v_1$ in the obvious way such that the order in which a depth-first search would visit the leaves remains unchanged. However, if $v_1$ is a root of a planar tree $\tau_i$ in the planar forest $\tau = (\tau_1,\dots, \tau_m)$, we consider the rooted, planar forest $(\tau_1 ', \dots, \tau_p ')$  we get by deleting the closed edge $e$ from $\tau_i$ and giving each new tree a distinct root. In this case, we define $d_e(\tau) = ( \tau_1,\dots, \tau_{i-1}, \tau_1 ', \dots, \tau_p ' , \tau_{i+1}, \dots, \tau_m)$.


Let $PF_n =  \displaystyle \bigcup_{k=0}^n PF_n(k)$. Now to define $D_n$,
$$D_n = \bigcup_{\tau \in PF_n} \{\tau \} \times [-1,1]^{E(\tau)}/\sim$$
where
$$(\tau, (t,s)) \sim (r_e(\tau), (t,-s)) \verb|  | \mathrm{ and } \verb|  |
(\tau, (t,1)) \sim (d_e(\tau), t)$$

Here $e \in E(\tau)$, and tuple $(t,s) \in [-1,1]^{E(\tau) - \{e\}} \times [-1,1]^{\{e\}}$ has the value $s$ in the coordinate corresponding to the edge $e$. If $E(\tau)$ is empty, $[-1,1]^{E(\tau)}$ is a single point.

By removing the midcubes of each cube $[-1,1]^{E(\tau)}$, we get $2^k$ connected components. The closure of each component is a cube, the \tit{subcubes} are the collection of these cubes. We call $[0,1]^{E(\tau)}$ the \tit{positive subcube}. Each subcube of $[-1,1]^{E(\tau)}$ is identified with the positive subcube of some unique forest $\tau'$ obtained from $\tau$ by a sequence of flippings. See 

We can also construct $D_n$ by gluing together only the positive subcubes of each cube,

$$  D_n = \bigcup_{\tau \in PF_n} \{\tau \} \times [0,1]^{E(\tau)}/\sim  $$
where
$$(\tau, (t,0)) \sim (r_e(\tau), (t,0)) \verb|  |  \mathrm{and} \verb|  | 
(\tau, (t,1)) \sim (d_e(\tau), t)$$

Given two opposite faces of a subcube, one face is contained in a midcube of a cube in~$D_n$. We call such a face being \tit{inward}. It is glued directly to the face of  another subcube of the same dimension. The other \tit{outward} face is contained in a face of a cube in $D_n$ and is glued to a subcube of 1 dimension less. See Figure~\ref{fig:loops-Dn} for an example.

Lastly, we identify the cube $(\tau, [0,1]^{E(\tau)} )$ and $(\tau', [0,1]^{E(\tau')} )$ if $\tau=(\tau_1, \dots, \tau_m)$ and $\tau'$ is some permutation of the sequence $\tau$. We call this quotient cube complex $\widehat{D}_n$. 

\begin{example}
    The highest dimensional cubes in $D_3$ are 2-dimensional. The labels for these cubes are forests with a single tree with two internal edges. 
    As seen in the Figure~\ref{fig:example_of_Dn}, there are only three 2-cubes.
     There are twelve 2-subcubes.
\end{example}

\begin{figure}[h]
    \centering
    \includegraphics[width=0.7\textwidth]{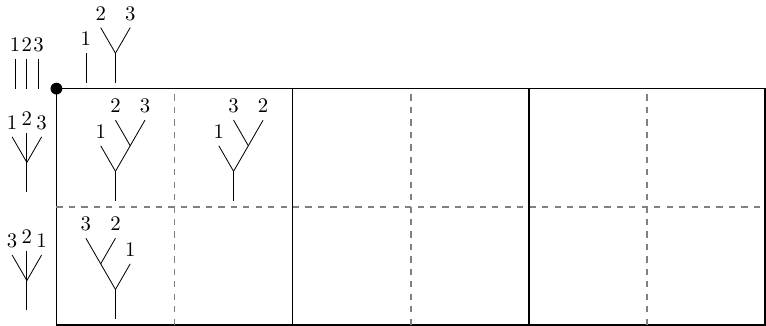}      
    \caption{A partial sketch of the cube complex $D_3$ using subcubes.}
    \label{fig:example_of_Dn}
\end{figure}

Generally speaking, suppose that a midcube $Z$ is contained a 2-cube which also contains an oriented 1-cube $\vec{\epsilon}_1$. We define $Z \cdot \vec{\epsilon}_1$ to be the reflection of $\vec{\epsilon}_1$ about $Z$. 
Consider a sequence of oriented 1-cubes $\vec{\epsilon}_i = Z_{i-1} \cdot \vec{\epsilon}_{i-1}$ beginning at $\vec{\epsilon_1}$ and where each pair $Z_i, \vec{\epsilon}_i$ is contained in some 2-cube. The 1-cube $\vec{\epsilon}_i$ is either the reverse of $\vec{\epsilon}_{i-1}$ or is the 1-cube with the same orientation but on the opposite side in some 2-cube .
All $\vec{\epsilon}_i$ are dual to the same hyperplane. Moreover, any two 1-cubes $\vec{\epsilon}, \vec{\epsilon'}$ that are dual to the same hyperplane will lie in a sequence as above beginning with $\vec{\epsilon}$ and ending with $\vec{\epsilon'}$. 

A 1-cube $\epsilon$ can be given an orientation by choosing the subcube containing the initial 0-cube of $\vec{\epsilon}$. 

\begin{rem}
    When it is not ambiguous, we can write $H_i \cdot \vec{\epsilon}_i$ instead of $Z_i \cdot \vec{\epsilon}_i$, where $H_i$ is the hyperplane that contains $Z_i$. For any hyperplane and 1-cube, $H \cdot \vec{\epsilon}$ is only defined when $\vec{\epsilon}$ is a 1-cube of a unique 2-cube with a unique midcube in $H$. Self-intersection and self-osculation will prevent uniqueness. 
    \label{rem:ref-hyperplanes-ambiguous}
\end{rem}

\begin{example}
    In $\widehat{D}_n$, an oriented 1-cube is labelled by $\tau \in PF_n(1)$. 
    Let $\sigma \in PF_n(2)$ with internal edges $E(\sigma) = \{ e_1, e_2  \}$ and $\tau = d_{e_1}(\sigma)$. Let $Z_1$ and $Z_2$ be the midcubes dual to the 1-cubes $d_{e_2}(\sigma)$ and $d_{e_1}(\sigma)$, respectively. Then $Z_1 \cdot \tau = d_{e_1} \circ r_{e_1}(\sigma)$ and $Z_2 \cdot \tau = d_{e_1} \circ r_{e_2} (\sigma) = r_{e_2}(\tau)$. See Figure~\ref{fig:reflections_by_midcubes}.

\begin{figure}[h]
    \centering
    \includegraphics[width=0.6\textwidth]{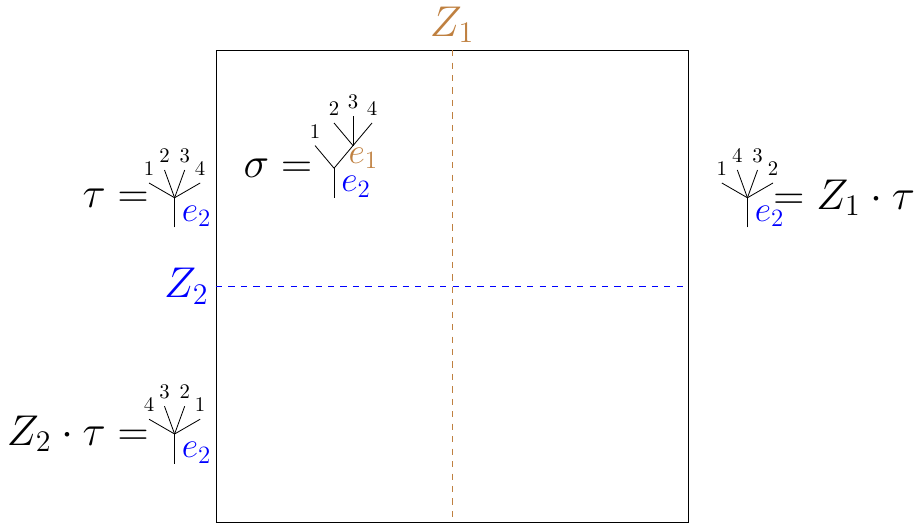} 
    \caption{Reflections $Z_i \cdot \tau$ in $\widehat{D}_3$}
    \label{fig:reflections_by_midcubes}
\end{figure}

\end{example}

The complex $\widehat{D}_n$ has only one 0-cube. All directed 1-cubes labelled by $ PF_n(1)$ are loops in $\widehat{D}_n$ and form a generating set $\{ s_A \mid A=\mathcal{O}(\tau), \tau \in PF_n(1) \}$ for the fundamental group $\pi_1(\widehat{D}_n)$. See Figure~\ref{fig:loops-Dn}.

\begin{figure}[h]
    \centering
    \includegraphics[width=0.5\textwidth]{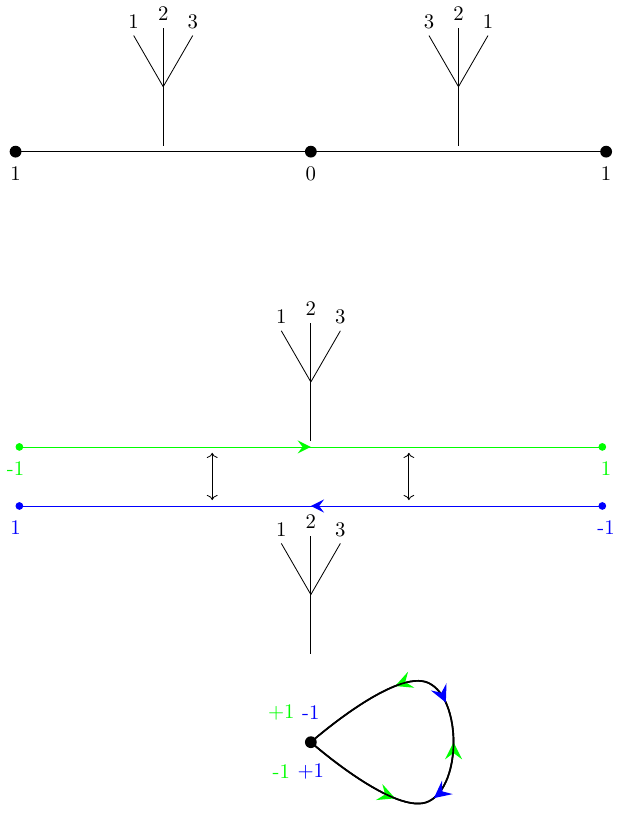}  
    \caption{As a complex made of subcubes, a 1-cube is made from gluing two 1-subcubes at a point. As a complex made of cubes, a 1-cube is made from identifying two 1-cubes in reversed orientation.}
    \label{fig:loops-Dn}
\end{figure}

\begin{rem}
    The fundamental group $\pi_1(\widehat{D}_n)$ is generated by the elements $s_A$ subject to the relations,
    $$s_As_{A^r} = 1, \hspace{1cm}
    s_As_B = s_Bs_A, \hspace{1cm}
    s_{A^r}s_{CAB} = s_{CA^rB}s_A $$ 
    where $A, B, C$ are any disjoint ordered subsets of $[n]$,
It is easy to verify this presentation once we know the 2-cubes in Figure~\ref{fig:types_of_squares} are the only kinds of 2-cube possible in $\widehat{D}_n$.
    \label{rem:fund-gp-types-squares}
\end{rem}

\begin{figure}[h]
    \centering
    \includegraphics[width=0.6\textwidth]{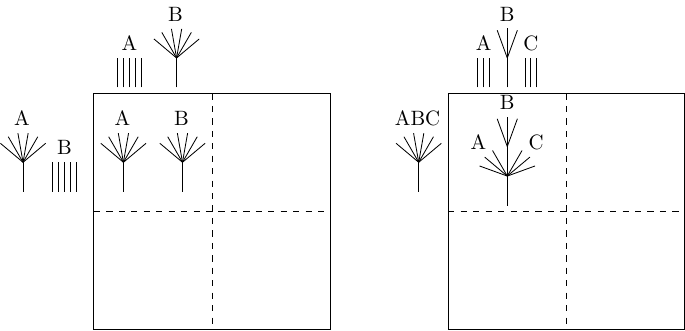}\\
    \caption{The two types of 2-subcubes in $\widehat{D}_n$}
    \label{fig:types_of_squares}
\end{figure}

In~\cite{IKLPR23}*{Theorem 11.2}, it is shown that for every $n$,  $\widehat{D}_n$ is an NPC cube complex and $\pi_1(\widehat{D}_n) \cong PvC_n$. In particular, the virtual cactus group is virtually the fundamental group of an NPC cube complex. 


\section{Finite cover for $\widehat{D}_n$ }

Let $L$ be the collection of subsets of $[n]$ that are not singletons and not empty. It has cardinality $|L| = 2^n -n-1$. For any forest $\tau$, there is a map $\mathcal{L}: E(\tau) \to L$ where $\mathcal{L}(e) \subset [n]$ is the unordered set of labels of the leaves in $\tau$ above $e$. To make notation easier, if $\tau$ has a single internal edge, $\mathcal{L}(\tau)$  is the set labels above that only edge. We call $\mathcal{L}(\tau)$ the \tit{type} of $\tau$.
 Let the forest $\tau$ label a 1-cube in $\widehat{D}_n$. The \tit{type} of the 1-cube is $\mathcal{L}(\tau)$. The following lemma allows us to define the type of a midcube and a hyperplane in $\widehat{D}_n$ by the type of its dual 1-cube. The notation $\mathcal{L}(\cdot)$ is used to denote the type of a forest, a 1-cube, a midcube or a hyperplane.

\begin{lemma}
    Two oriented $1$-cubes in $\widehat{D}_n$ have the same type if and only if they are dual to the same hyperplane. 
    \label{lemma:hyperplaneType}
\end{lemma}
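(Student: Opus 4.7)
The plan is to prove both directions separately; the forward direction is a short 2-cube computation, and the backward direction needs a classification of type-$A$ 1-cubes followed by an adjacent-transposition argument.

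For the forward direction, I use the fact (stated just before the lemma) that any two 1-cubes dual to the same hyperplane are joined by a sequence of 1-cubes in which consecutive members lie on opposite sides of some 2-cube (or are reverses of each other). It therefore suffices to show that opposite 1-cubes in any 2-cube share a type. Fix a 2-cube labeled by $\sigma \in PF_n(2)$ with internal edges $e_1, e_2$. Its two 1-cubes at $e_1 = \pm 1$ are labeled by $d_{e_1}(\sigma)$ and $d_{e_1}(r_{e_1}\sigma)$; since both $d_{e_1}$ and $r_{e_1}$ preserve the set of leaves above $e_2$, these 1-cubes both have type $\mathcal{L}_\sigma(e_2)$. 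Symmetrically, the pair at $e_2 = \pm 1$ has common type $\mathcal{L}_\sigma(e_1)$.

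For the backward direction, I first classify 1-cubes of type $A \in L$. The ``no degree-2 vertex'' constraint together with ``exactly one internal edge'' forces every $\tau \in PF_n(1)$ of type $A$ to have the shape: a main tree consisting of the root $r$, a vertex $v_1$, the internal edge $e = r v_1$, and $|A|$ leaves above $v_1$ labeled by $A$, together with $n - |A|$ single-edge trees labeled by $[n] \setminus A$. After quotienting by $\tau \sim r_e(\tau)$ and by the tree-sequence permutation identification built into $\widehat{D}_n$, such a 1-cube is determined entirely by the planar ordering of the leaves above $v_1$, taken up to reversal. When $|A| = 2$ there is only one such 1-cube and the claim is immediate; assume $|A| \geq 3$.

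I then realize an adjacent transposition by traversing a 2-cube. Given an ordering $(a_1, \dots, a_k)$ of $A$ and an index $1 \leq i \leq k-1$, let $\sigma \in PF_n(2)$ have main tree with root $r$, internal edge $e_1 = r v_1$, and $v_1$'s ascending edges ordered $(a_1, \dots, a_{i-1}, v_2, a_{i+2}, \dots, a_k)$, where $e_2 = v_1 v_2$ is the second internal edge and $v_2$'s two ascending edges are ordered $(a_i, a_{i+1})$. A direct unwinding of the definitions shows that $d_{e_2}(\sigma)$ and $d_{e_2}(r_{e_2}\sigma)$---the opposite 1-cubes at $e_2 = \pm 1$---both have type $A$, with orderings $(a_1, \dots, a_i, a_{i+1}, \dots, a_k)$ and $(a_1, \dots, a_{i+1}, a_i, \dots, a_k)$ respectively. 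So the 1-cubes for any two orderings differing by an adjacent transposition are dual to the same hyperplane; since such transpositions generate $S_A$, all 1-cubes of type $A$ are dual to the same hyperplane. The main bookkeeping is to verify that $\sigma$ is a valid forest with exactly two internal edges (the hypothesis $|A| \geq 3$ ensures $v_1$ has degree $\geq 3$) and that $d_e$ and $r_e$ act on orderings as claimed; both follow directly from the depth-first-search conventions.
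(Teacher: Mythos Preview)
Your proof is correct and follows essentially the same approach as the paper: the forward direction reduces to checking that opposite 1-cubes of a 2-cube share a type (you argue this by observing $d_{e_1}$ and $r_{e_1}$ preserve the leaf set above $e_2$, while the paper does an equivalent explicit three-case computation of the ordered sets), and the backward direction is the same adjacent-transposition argument via a 2-cube $\sigma$ with a small nested edge grouping two consecutive leaves. Your explicit classification of type-$A$ 1-cubes and the separate treatment of $|A|=2$ match the paper's handling as well.
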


\begin{proof}
    We check that all oriented 1-cubes dual to the same midcube in a 2-cube have the same type. 
    
    Let $\sigma \in PF_n(2)$ label a 2-subcube and let $E(\sigma)=\{e_1, e_2\}$. 
    Let $Z$ be the midcube dual to the oriented 1-cube labelled by $d_{e_1}(\sigma)$. Then the four oriented 1-cubes dual to $Z$ are 
    $d_{e_1}(\sigma)$, 
    $(d_{e_1}\circ~r_{e_2})(\sigma)$,    
    $(d_{e_1}\circ~r_{e_1})(\sigma)$, 
    $(d_{e_1}\circ~r_{e_2}\circ~r_{e_1})(\sigma)$. 

    There are three cases to check: $e_1, e_2$ lie in disjoint trees, $e_1$ lies above $e_2$, and $e_2$ lies above $e_1$.
    In the first case, suppose $A = \mathcal{O}(  d_{e_1}(\sigma)  )$. Then 
    $A=   \mathcal{O}(d_{e_1}\circ~r_{e_1}(\sigma))$ 
    and $A^r = \mathcal{O}(d_{e_1}\circ~r_{e_2}(\sigma)) = \mathcal{O}(d_{e_1}\circ~r_{e_2}\circ~r_{e_1}(\sigma))$. As unordered sets, these are all equal.

    For case two, let $B = \mathcal{O}(\sigma, e_1)$. Notice $B$ embeds as an interval into $\mathcal{O}(\sigma, e_2)$. Let $\mathcal{O}(\sigma, e_2)$ be the concatenation~$ABC$ of disjoint (potentially empty) ordered subsets of~$[n]$. 
    Then $\mathcal{O}(d_{e_1}(\sigma)) = ABC$, $\mathcal{O}(d_{e_1}\circ~r_{e_2}(\sigma)) = C^rB^rA^r$, $\mathcal{O}(d_{e_1}\circ~r_{e_1}(\sigma)) = AB^rC$, $\mathcal{O}(d_{e_1}\circ~r_{e_2}\circ~r_{e_1}(\sigma)) = C^rBA^r$. As unordered sets, they are equal.

    For the last case, let $B = \mathcal{O}(\sigma, e_2)$. It embeds as a interval into $\mathcal{O}(\sigma, e_1) = ABC$ where $A,B,C$ are disjoint. We can verify that  $\mathcal{O}(d_{e_1}(\sigma))=B$, $\mathcal{O}(d_{e_1}\circ~r_{e_2}(\sigma)) = B^r$, $\mathcal{O}(d_{e_1}\circ~r_{e_1}(\sigma)) = B^r$, $\mathcal{O}(d_{e_1}\circ~r_{e_2}\circ~r_{e_1}(\sigma)) = B$. As unordered sets, they are equal.

    \begin{figure}[h]
        \centering
        \includegraphics[width=0.6\textwidth]{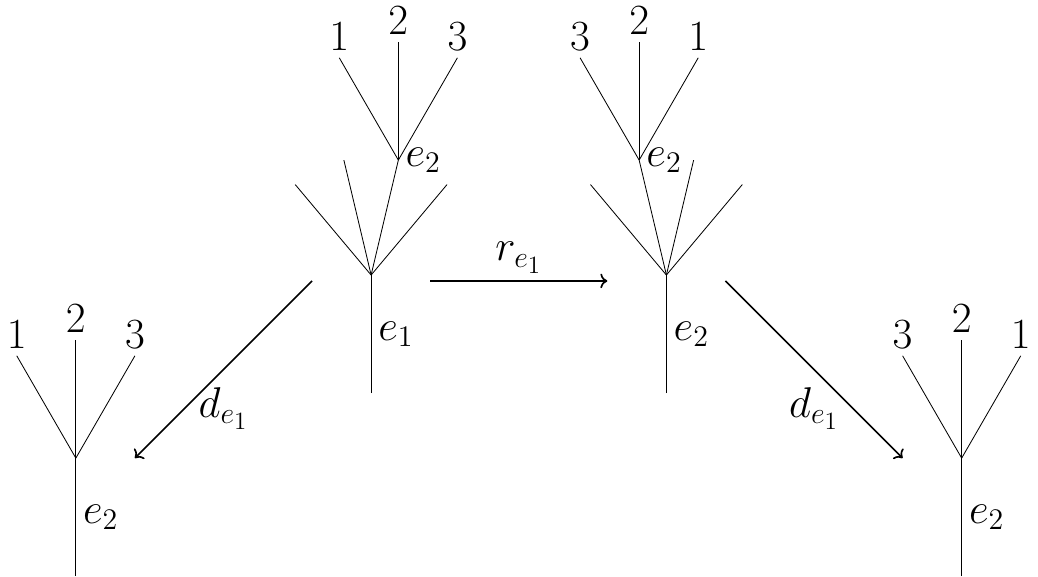}\\
        \caption{An example of the argument for case 3 when $e_2$ lies above $e_1$. We choose not draw the trees in the forests with a single internal edge.}
    \end{figure}

    To show the other direction, suppose two oriented 1-cubes labelled by $\tau_1,\tau_2\in~PF_n(1)$ are of the same type and $\tau_1 \neq \tau_2$. In other words, the ordered sets $\mathcal{O}(\tau_1)$ and $\mathcal{O}(\tau_2)$ are not equal. We can swap the order of two consecutive elements $a,b$ in $\mathcal{O}(\tau_1)$. This corresponds to inserting an internal edge $e$ to $\tau_1$, denote this new forest $\sigma\in PF_n(2)$, such that $d_e(\sigma)=\tau_1$ and $\mathcal{O}(\sigma, e) = (a,b)$ are the consecutive elements being swapped. The 1-cube labelled by $d_e~\circ~r_e(\sigma)$ is dual to the same midcube as $\tau_1$ and swaps $a,b$. If $\mathcal{O}(\tau_1)$ as only two elements, we cannot insert such an edge. Then the swapping of the elements corresponds to taking the reverse 1-cube, which is still dual to the same hyperplane. In general, $\mathcal{O}(\tau_1)$ and $\mathcal{O}(\tau_2)$ will differ be a finite number of swaps.

\end{proof}

\begin{rem}
  Given a 1-cube labelled by $\tau$, the type $A = \mathcal{L}(\tau)$ limits the possible types $B = \mathcal{L}(Z)$ of a reflecting midcube. From Figure~\ref{fig:types_of_squares}, the reflection $Z\cdot \tau$ can have no affect on $\tau$ or can reverse a subinterval of $A$. These correspond to the types being disjoint or nested. In all cases, the (possibly empty) interval $ \mathcal{L}(\tau) \cap  \mathcal{L}(Z)$ in reversed in $\mathcal{O}(\tau)$.
  We denote the resulting ordered set as $Z \cdot \mathcal{O}(\tau) := \mathcal{O}(Z\cdot \tau)$. 
  
  Suppose we have an unordered type $l\in L$ such that $l$ and $\mathcal{L}(\tau)$ are either disjoint or nested. Furthermore, suppose that $l \cap \mathcal{L}(\tau)$  is a (possibly empty) subinterval of $\mathcal{O}(\tau)$. Then there is a midcube $Z$ with type $l$ such that $Z\cdot \tau$ is defined. Such a midcube is contained in the 2-cube labelled by $\sigma \in PF_n(2)$, the forest we get by inserting a new edge $e$ into $\tau$ such that  $d_e(\sigma) = \tau$ and $\mathcal{L}(\sigma,e) = l$.
  \label{rem:reflecting_affects_order}.
\end{rem}

We are now ready to define the finite cover $M_n$. We will identify $2^{|L|}$ copies of $\widehat{D}_n$ with elements of $(\mathbb{Z}/2\mathbb{Z})^L$. We will cut and reglue along all hyperplanes. 
In particular, we will modify the gluing conditions $(\tau, (t,0)) \sim (r_e(\tau), (t,0))$ in the subcube definition of $\widehat{D}_n$. If $e$ is the internal edge of some forest $\tau$, denote $1_e = (0,\cdots,0,1,0,\cdots,0) \in (\mathbb{Z}/2\mathbb{Z})^L$ where its only non-zero entry is in the coordinate $\mathcal{L}(\tau, e)$. We define $1_{\alpha}$ in exactly the same way for all objects $\alpha$ with a type $\mathcal{L}(\alpha)$ (1-cubes, midcubes, hyperplanes).

\begin{definition}
    Take the collection of disjoint subcubes $\{ (\tau, \lbrack 0,1 \rbrack^{E(\tau)}, s)\}$ for all forests $\tau \in PF_n$ and all parameters $s \in  (\mathbb{Z}/2\mathbb{Z})^L$.  Let 
    
    $$  M_n = \bigcup_{\tau \in PF_n} \{\tau \} \times [0,1]^{E(\tau)} \times (\mathbb{Z}/2\mathbb{Z})^L  /\sim$$
where we impose the three gluing conditions for all $\tau, s$:
\begin{itemize}
    \item $ (\tau , (t,1), s)  \sim (d_e(\tau), t , s)$
    \item $ (\tau , t, s)  \sim (\tau', t , s)$ if $\tau'$ is just the permutation of trees in $\tau$
    \item  $(\tau , (t,0), s)  \sim (r_e(\tau), (t, 0), s + 1_e)$
\end{itemize}
\label{def:cover}
\end{definition}

\begin{lemma}
    The cube complex $M_n$ is a finite cover of $\widehat{D}_n$. The covering map is defined as
    $$P: M_n \to \widehat{D}_n$$
    $$(\tau,t,s) \mapsto (\tau, t)$$
    The degree of this covering is $2^{2^n -n-1}$.
    \label{lemma:covering-map}
\end{lemma}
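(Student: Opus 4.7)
The plan is to verify that $P$ is a well-defined, combinatorial, local homeomorphism between cube complexes and then to determine the degree by counting preimages of the unique 0-cube of $\widehat{D}_n$. Well-definedness is immediate: each of the three gluing relations in Definition \ref{def:cover} projects, upon forgetting $s$, to a defining gluing of $\widehat{D}_n$; in particular the modified relation $(\tau,(t,0),s)\sim (r_e(\tau),(t,0),s+1_e)$ projects to $(\tau,(t,0))\sim (r_e(\tau),(t,0))$. Hence $P$ descends to a continuous combinatorial map sending each cube $(\tau,[0,1]^{E(\tau)},s)$ of $M_n$ isometrically onto the corresponding cube of $\widehat{D}_n$.

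I would next show $P$ is a local homeomorphism, working stratum by stratum. On the interior of a top-dimensional cube the preimage is a disjoint union of $2^{|L|}$ interiors, one per $s$. On the interior of a midcube $Z$, a neighborhood in $\widehat{D}_n$ is the union of two half-subcubes meeting across $Z$, and its preimage through sheet $s$ is a half-subcube in sheet $s$ glued to a half-subcube in sheet $s+1_{\mathcal{L}(Z)}$ via the modified $r_e$-rule, still a local homeomorphism onto its image. Along $d_e$-faces no sheet switching occurs, so the local picture matches $\widehat{D}_n$ sheet-by-sheet. At a 0-cube one checks that the link of each preimage maps isomorphically onto the link of the 0-cube of $\widehat{D}_n$: each incident higher-dimensional cube lifts uniquely once the sheet $s$ is fixed.

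The main challenge is verifying that the fiber above the unique 0-cube $v\in\widehat{D}_n$ has exactly $|(\mathbb{Z}/2\mathbb{Z})^L|$ points---equivalently, that a chain of gluings identifies $(\tau_0,\mathrm{pt},s)$ with $(\tau_0,\mathrm{pt},s')$ only when $s=s'$. Any such chain projects to a loop at $v$ in $\widehat{D}_n$, and the net shift $s'-s$ equals the sum $\sum 1_{\mathcal{L}(e_i)}$ over the $r_e$-moves appearing in the chain. Using the presentation of $\pi_1(\widehat{D}_n)$ in Remark \ref{rem:fund-gp-types-squares}, I would check that the assignment $s_A\mapsto 1_A$ respects each relation: $s_As_{A^r}=1$ since $1_A+1_A=0$; the commutation relations are trivially preserved; and $s_{A^r}s_{CAB}=s_{CA^rB}s_A$ is preserved because $CAB$ and $CA^rB$ have the same underlying unordered subset of $[n]$. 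Hence $s_A\mapsto 1_A$ extends to a surjective homomorphism $\pi_1(\widehat{D}_n)\to (\mathbb{Z}/2\mathbb{Z})^L$, so the shift depends only on the homotopy class of the projected loop. It follows that the fiber over $v$ has cardinality exactly $|(\mathbb{Z}/2\mathbb{Z})^L|=2^{2^n-n-1}$, giving the stated degree of $P$.
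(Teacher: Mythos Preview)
Your well-definedness check and the stratum-by-stratum verification that $P$ is a local homeomorphism are correct, though the paper takes a different route: it first shows that the subcubes of $M_n$ assemble into honest cubes $[-1,1]^{E(\tau)}$ by verifying that the reflections across inward faces commute and square to the identity, and then observes that $P$ is cubical and induces an isomorphism on links at every vertex. This packaging has the advantage of simultaneously establishing that $M_n$ is a bona fide cube complex and that $P$ is a covering, whereas your stratified argument leaves the cube-complex structure on $M_n$ implicit.

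Your degree computation, however, is over-engineered and contains a gap. A ``chain of gluings'' identifying $(\tau_0,\mathrm{pt},s)$ with $(\tau_0,\mathrm{pt},s')$ is a finite sequence of applications of the relations in Definition~\ref{def:cover}; every intermediate point already projects to the single $0$-cube $v$, so the projection is the constant sequence at $v$, not a loop, and there is no element of $\pi_1(\widehat D_n)$ on which to evaluate your homomorphism. The direct argument is much shorter: the only relation that alters the parameter $s$ is the $r_e$-rule, which requires the $e$-coordinate of $t$ to equal $0$. But any representative in the equivalence class of $(\tau_0,\mathrm{pt},s)$ has every $t$-coordinate equal to $1$, since the remaining relations (permutation and $d_e$) preserve both $s$ and this property. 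Hence the $2^{|L|}$ points $\{(\tau_0,\mathrm{pt},s):s\in(\mathbb Z/2\mathbb Z)^L\}$ are pairwise distinct in $M_n$, giving the degree immediately. Your $\pi_1$-computation is not wasted---it is exactly the content of the Remark following the lemma, identifying $\pi_1(M_n)$ with the kernel of $s_A\mapsto 1_A$---but it does not substitute for the fibre count.
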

\begin{proof}
    The cubes in $M_n$ are made from grouping subcubes together such that the intersection of two groups of subcubes satifies the definition of a cube complex.

    Pick a $k$-subcube labelled by $(\tau, s)$. Consider the collection of $k$-subcubes that are glued to an inward face of $(\tau, s)$. Consider the $k$-subcubes that are glued to an inward face of these $k$-subcubes, and so forth. The total collection of these subcubes is $\{ (r_{e_1}\cdots r_{e_m}) (\tau), s+1_{e_1} +1_{e_2}+\dots+1_{e_m} | e_i \in E(\tau)   \}$ where the $e_i$'s are not necessarily distinct and $m\in \mathbb{N}$. Let the union of these subcubes be $C$. 
    Then $C$ is a cube and the groups of inward faces are the midcubes of $C$. 
    This is because reflections about the inward faces commute $Z_1\cdot Z_2 \cdot(\tau,s) = (Z_1\cdot Z_2 \cdot\tau, s + 1_{Z_1} + 1_{Z_2})=(Z_2\cdot Z_1 \cdot \tau, s + 1_{Z_2} + 1_{Z_1})  =Z_2\cdot Z_1 \cdot(\tau,s)$ and $Z_1\cdot Z_1 \cdot(\tau,s)=(Z_1\cdot Z_1 \cdot \tau, s+1_{Z_1}+1_{Z_1}) = (\tau,s)$. The isometry from $C$ to the cube $[-1,1]^{E(\tau)}$ can be defined by sending  $(\tau,s)$ to the positive subcube $[0,1]^{E(\tau)}$ and sending the inward faces of $(\tau,s)$ to the inward faces of $[0,1]^{E(\tau)}$.

    To see that $P$ is a covering map, notice that the map is cubical and induces an isomorphism between the link of a vertex in $M_n$ and link of a vertex in $\widehat{D}_n$. 

\end{proof}


\begin{rem}
The fundamental group of $M_n$ is the kernel of the map 
$$PvC_n \to (\mathbb{Z}/2\mathbb{Z})^L$$
$$w=s_{A_1} s_{A_2} \cdots s_{A_m} \mapsto 1_{A_1} + \dots+1_{A_m}$$

\end{rem}

We define the type of 1-cubes, midcubes, and hyperplanes in $M_n$ to be the type of the corresponding features in the base space $\widehat{D}_n$. Unlike $\widehat{D}_n$, there may be different hyperplanes in $M_n$ with the same type (Figure~\ref{fig:M_n-hyperplane_types}). To keep the notation simple, we will continue to use $\mathcal{L}$ to denote the type of a feature in $M_n$. The reflections about midcubes in $M_n$ satisfy  $Z \cdot (\tau, s) = (P(Z)\cdot \tau, s + 1_{Z})$ where $P$ is the covering map from Lemma~\ref{lemma:covering-map}.

\begin{figure}[h]
    \centering
    \includegraphics[width=0.6\textwidth]{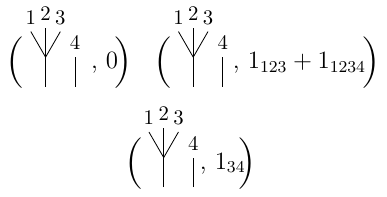}\\
    \caption{In $M_n$, the two forests in the first row label 1-cubes dual to the same hyperplane. The forest in the second row labels a 1-cube that is dual to a different hyperplane. We leave the justification as an exercise for the reader.}
    \label{fig:M_n-hyperplane_types}
\end{figure}


\section{$M_n$ is a Special Cube Complex}

Consider a 1-cube $\epsilon$ and its dual hyperplane in $\widehat{D}_n$. Since $\widehat{D}_n$ has a single 0-cube, the endpoints of $\epsilon$ are identified. The oriented 1-cube $\vec{\epsilon}$ and its reverse oriented 1-cube share an initial 0-cube. Thus the hyperplane self-osculates. Although $\widehat{D}_n$ contains unwanted hyperplane pathologies, we will show that the finite cover $M_n$ does not.

\begin{lemma}
    $\widehat{D}_n$ has no self-intersecting hyperplanes.
    \label{lemma:dn_selfintersection}
\end{lemma}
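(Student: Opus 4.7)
The plan is to reduce non-self-intersection to a statement about the two midcubes of each $2$-cube. A hyperplane in $\widehat{D}_n$ self-intersects precisely when some $2$-cube has both of its midcubes lying in the same hyperplane; equivalently, by Lemma~\ref{lemma:hyperplaneType}, when those two midcubes have the same type. So it suffices to show that for every $2$-cube in $\widehat{D}_n$, labelled by some $\sigma \in PF_n(2)$ with internal edges $e_1, e_2$, the types $\mathcal{L}(\sigma, e_1)$ and $\mathcal{L}(\sigma, e_2)$ are distinct.

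I would carry out the same case analysis used in Lemma~\ref{lemma:hyperplaneType}, depending on the relative position of $e_1$ and $e_2$ in $\sigma$. If $e_1$ and $e_2$ lie in disjoint trees of $\sigma$, then their types are disjoint nonempty subsets of $[n]$, and in particular unequal. If one edge, say $e_1$, lies above $e_2$, I would show $\mathcal{L}(\sigma, e_1) \subsetneq \mathcal{L}(\sigma, e_2)$. The inclusion is immediate from the definition of ``above''. For properness, let $v$ be the endpoint of $e_2$ at which $e_2$ is descending. Then $v$ is not a leaf, because the internal edge $e_2$ is disjoint from the leaves, and by the tree condition that no vertex has exactly two edges, $v$ must carry at least two ascending edges. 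Only one of these lies on the path up to $e_1$, so another ascending edge leads to a leaf that is above $e_2$ but not above $e_1$, giving the strict inclusion. The symmetric case $e_2$ above $e_1$ is identical.

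In every case $\mathcal{L}(\sigma, e_1) \neq \mathcal{L}(\sigma, e_2)$, so by Lemma~\ref{lemma:hyperplaneType} the two pairs of parallel $1$-cubes of the $2$-cube are dual to distinct hyperplanes, and no hyperplane self-intersects at this $2$-cube. Since every $2$-cube is covered by one of the three cases, no hyperplane of $\widehat{D}_n$ self-intersects. The only mildly delicate step is the properness of the inclusion in the nested case, which is precisely where the no-degree-two condition on the vertices of the underlying trees is used.
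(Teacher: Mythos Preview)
Your proof is correct and follows essentially the same strategy as the paper: both arguments reduce to showing that the two midcubes of any $2$-cube in $\widehat{D}_n$ have different types, and both use the dichotomy (disjoint trees versus nested edges) for the two internal edges of a $\sigma\in PF_n(2)$. The only cosmetic difference is that you verify proper nesting directly from the no-degree-two condition on tree vertices, whereas the paper phrases the contradiction via ``same type $\Rightarrow$ equal ordered sets $\Rightarrow$ $\tau_1=\tau_2$'' and then invokes that the link is simplicial; these are two ways of packaging the same combinatorial fact.
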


\begin{proof}
     Let forests $\tau_1,\tau_2$ label two dual 1-subcubes to the hyperplane $H$. By Lemma~\ref{lemma:hyperplaneType}, they have the same type. They also have the same endpoint, the only 0-cube in $\widehat{D}_n$. 
     Suppose they form a corner of a 2-subcube. Since the link is simplicial, $\tau_1 \neq \tau_2$. The types are not disjoint so they can form a corner of a 2-subcube only if the ordered sets $\mathcal{O}(\tau_1)$ and $\mathcal{O}(\tau_2)$ are nested as an interval (Remark~\ref{rem:fund-gp-types-squares}). 
     Thus $\mathcal{O}(\tau_1) = \mathcal{O}(\tau_1)$ which implies~$\tau_1=\tau_2$. Contradiction.
\end{proof}

In a general setting, suppose an NPC cube complex $X$ has embedded hyperplanes, each given a unique type. Suppose hyperplanes of a finite cover $\widetilde{X}$ receive types from the base space $X$. Then any two hyperplanes in $\widetilde{X}$ of the same type cannot intersect because their image under the covering map is an embedded hyperplane. More specifically, any two dual 1-cube of the two hyperplanes in $\widetilde{X}$ that form a corner of a 2-cube will map to two distinct dual 1-cube of a single embedded hyperplane in $X$ and will form a corner of a 2-cube, which is a contradicton. The following corollary is a result of this general discussion.

\begin{corollary}
    Hyperplanes of the same type do not intersect in $M_n$. In particular, $M_n$ has no self-intersecting hyperplanes.
    \label{cor:selfintersection}
\end{corollary}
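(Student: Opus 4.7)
The plan is to apply the general principle spelled out in the paragraph immediately preceding the corollary, specialized to our covering $P\colon M_n \to \widehat{D}_n$. The input facts are: (i) by Lemma~\ref{lemma:dn_selfintersection}, hyperplanes in $\widehat{D}_n$ are embedded; (ii) by Lemma~\ref{lemma:hyperplaneType}, distinct hyperplanes of $\widehat{D}_n$ have distinct types; (iii) $P$ is a cubical covering map (Lemma~\ref{lemma:covering-map}) and by definition the type of a hyperplane, midcube, or 1-cube in $M_n$ is the type of its image in $\widehat{D}_n$.

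I would argue by contradiction. Assume two hyperplanes $H_1 \ne H_2$ in $M_n$ have the same type and intersect. Then there exist dual 1-cubes $\epsilon_1$ to $H_1$ and $\epsilon_2$ to $H_2$ that meet at a $0$-cube $v$ and span a corner of a $2$-cube $C \subset M_n$. Since $H_1 \ne H_2$, the link of $v$ is simplicial (as $M_n$ is NPC, being a cover of an NPC complex), so in particular $\epsilon_1 \ne \epsilon_2$. Push everything down via $P$: the image $P(C)$ is a $2$-cube in $\widehat{D}_n$ whose corner at $P(v)$ is spanned by the $1$-cubes $P(\epsilon_1)$ and $P(\epsilon_2)$, which remain distinct because $P$ restricted to $C$ is an isometric embedding. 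Let $\widetilde{H}_i \subset \widehat{D}_n$ be the hyperplane dual to $P(\epsilon_i)$; then $\widetilde{H}_1$ and $\widetilde{H}_2$ have the same type (types are preserved by $P$), so by Lemma~\ref{lemma:hyperplaneType} we get $\widetilde{H}_1 = \widetilde{H}_2$. But then this single hyperplane of $\widehat{D}_n$ self-intersects at the corner $(P(\epsilon_1), P(\epsilon_2))$, contradicting Lemma~\ref{lemma:dn_selfintersection}.

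For the "in particular" clause, observe that if a hyperplane $H$ in $M_n$ self-intersects, then $H$ and $H$ are two hyperplanes (trivially) of the same type which intersect, ruled out by the previous paragraph.

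The argument is essentially a formalization of the general principle already articulated in the paragraph above the corollary, so no step is a genuine obstacle. The only point requiring a little care is to confirm that the two 1-cubes $\epsilon_1, \epsilon_2$ project to two \emph{distinct} 1-cubes in $\widehat{D}_n$; this is immediate because the covering map is a local isometry on each cube, so the corner in $C$ maps to a genuine corner in $P(C)$.
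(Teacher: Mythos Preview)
Your argument is correct and is precisely the formalization of the general discussion in the paragraph preceding the corollary, which is all the paper offers by way of proof. The only quibble is structural: your main paragraph assumes $H_1\neq H_2$ and your final paragraph then appeals back to that case for self-intersection, which is circular as written; since $\epsilon_1\neq\epsilon_2$ is already part of the definition of ``intersect,'' you can simply drop the hypothesis $H_1\neq H_2$ and the single argument covers both cases at once.
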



\begin{lemma}
    Hyperplanes in $M_n$ are two-sided.
    \label{lemma:two-sided}
\end{lemma}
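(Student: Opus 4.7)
The plan is to give an explicit global orientation on every $1$-cube of $M_n$, defined from the subcube coordinates, and to verify that in any $2$-cube the two parallel $1$-cubes dual to a given midcube receive the same orientation; this is precisely two-sidedness. A $1$-cube of $M_n$ is a pair $\{(\tau,s),(r_e(\tau),s+1_e)\}$ of $1$-subcubes glued at their midcube-ends, where $\tau\in PF_n(1)$ with unique internal edge $e$. I would declare the initial $0$-cube to be the outer endpoint of the subcube $(\tau,s)$ when $s_{\mathcal{L}(\tau)}=0$ and the outer endpoint of $(r_e(\tau),s+1_e)$ when $s_{\mathcal{L}(\tau)}=1$. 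Since $\mathcal{L}(r_e(\tau))=\mathcal{L}(\tau)$ and $(s+1_e)_{\mathcal{L}(\tau)}=s_{\mathcal{L}(\tau)}+1$, the two descriptions of the same $1$-cube produce the same oriented $1$-cube, so the rule is well-defined.

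To verify the $2$-cube compatibility, fix a $2$-cube built from a $2$-subcube $(\sigma,s)$ with $E(\sigma)=\{e_1,e_2\}$, and consider the two parallel $1$-cubes at $e_1=\pm 1$, which are dual to the midcube $\{e_2=0\}$. Tracing the gluings of Definition~\ref{def:cover}, the $1$-cube at $e_1=1$ contains the subcube $(d_{e_1}(\sigma),s)$ and the $1$-cube at $e_1=-1$ contains the subcube $(d_{e_1}(r_{e_1}(\sigma)),s+1_{e_1})$; the outer endpoints of these two subcubes are the two vertices on the $e_2=1$ side of the $2$-cube. By Lemma~\ref{lemma:hyperplaneType} both $1$-cubes have type $\mathcal{L}(\sigma,e_2)$, so the rule orients the first by the bit $s_{\mathcal{L}(\sigma,e_2)}$ and the second by $(s+1_{e_1})_{\mathcal{L}(\sigma,e_2)}$.

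The crux of the argument is that these two bits coincide. Since $1_{e_1}$ is supported in the coordinate $\mathcal{L}(\sigma,e_1)$, it suffices to check that $\mathcal{L}(\sigma,e_1)\neq\mathcal{L}(\sigma,e_2)$. This holds because every internal vertex of a tree has at least two ascending edges, so the leaf-set above an internal edge strictly shrinks as one moves up the forest, forcing distinct internal edges to have distinct types. The analogous computation with the roles of $e_1$ and $e_2$ swapped handles the other parallel pair in the $2$-cube, so the global orientation extends consistently across every $2$-cube. I expect the main obstacle to be the bookkeeping: correctly identifying the subcubes that compose each parallel $1$-cube, and tracking how the $s$-shift $1_{e_1}$ propagates across midcubes. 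Once this is organized, the injectivity of the type assignment on $E(\sigma)$ makes the two bits agree and two-sidedness follows.
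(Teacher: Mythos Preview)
Your argument is correct and rests on the same invariant the paper uses: the coordinate $s_{\mathcal{L}(H)}$ of the parameter distinguishes the two sides of $H$. The paper packages this as a propagation argument---following a chain of reflections $Z_i$ and invoking Corollary~\ref{cor:selfintersection} to get $\mathcal{L}(Z_i)\neq\mathcal{L}(H)$, hence the $\mathcal{L}(H)$-bit never flips---while you define the orientation globally by that bit and check compatibility in a single $2$-cube, replacing the appeal to non-self-intersection with the direct combinatorial fact that distinct internal edges of a forest have distinct leaf-sets; these are two phrasings of the same mechanism.
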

\begin{proof}
    Let $(\tau_1, s_1)$ label an oriented 1-cube in $M_n$ that is dual to the hyperplane $H$ and let $e$ be the only internal edge of $\tau_1$. Consider a sequence of midcubes $Z_1,\dots,Z_m$ in $M_n$ such that the sequence of reflections $(\tau_{i}, s_{i}) = Z_{i-1} \cdot (\tau_{i-1}, s_{i-1})$ is defined and each $Z_i$ is not dual to the 1-cube $(\tau_i,s_i)$. Notice that $(\tau_{i-1}, s_{i-1})$ is the 1-cube opposite of $(\tau_{i}, s_{i})$ in some 2-cube and they are oriented in the same direction. 

    Let $l_i\in L$ be the type of $Z_{i}$. Notice that the reflection $Z_{i-1} \cdot (\tau_{i-1}, s_{i-1})$ modifies the parameter by adding 1 to the index corresponding to the type of the midcube. More precisely,  $s_i = s_{i-1} + 1_{l_i}$. By Corollary~\ref{cor:selfintersection}, we have $l_i\neq~\mathcal{L}(\tau_1)$, thus the coordinate  $\mathcal{L}(\tau_1)$ is the same in all parameters $s_i$. The reverse oriented 1-cube $(r_e(\tau_1), s_1 +1_e)$ cannot appear in the sequence. Therefore if we extend the orientation of $(\tau_1,s_1)$ to all dual 1-cubes of $H$, the orientation of each 1-cube will be unique.

\end{proof}

\begin{rem}
Notice that the proof of Lemma~\ref{lemma:two-sided} did not use anything specific to ~$\widehat{D}_n$. Take any NPC cube complex ~$X$ with embedded hyperplanes and a finite cover ~$\widetilde{X}$ constructed by cutting and regluing along the hyperplanes as we did in Definition~\ref{def:cover}. The proof in Lemma~\ref{lemma:two-sided} shows that the hyperplanes in ~$\widetilde{X}$ are two-sided.
\end{rem} 


\begin{lemma}
    Let $\tau_1,\dots,\tau_m$ label a sequence of oriented 1-cubes in $\widehat{D}_n$ such that each subsequent 1-cube comes from a reflection $\tau_i=Z_{i-1}\cdot\tau_{i-1}$ for some midcube $Z_{i-1}$. Let $H_{i}$ be the hyperplane containing~$Z_i$. If each hyperplane appears an even number of times in the bag $\{H_1,\dots,H_{m-1}\}$, then $\tau_1 = \tau_m$.
    \label{lemma:self-osc-helper}
\end{lemma}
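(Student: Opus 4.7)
My plan is to track three invariants of the forest $\tau_i$---its type $T:=\mathcal{L}(\tau_1)$, its tree partition, and its full depth-first leaf order $\mathcal{O}^*(\tau_i)$---and show that under the hypothesis each is preserved, which forces $\tau_m=\tau_1$. First, I would note that both $d_e$ and $r_e$ preserve the unordered set of leaves above the surviving internal edge, so every $\tau_i$ satisfies $\mathcal{L}(\tau_i)=T$ and, by Lemma~\ref{lemma:hyperplaneType}, is dual to a single hyperplane $H$. Second, since $r_{e_1}$ only reorders ascending edges at vertices at or above $v_2^{(e_1)}$, and $d_{e_1}$ affects the tree partition identically on $\sigma$ and $r_{e_1}(\sigma)$, each reflection preserves the tree partition.

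The heart of the argument would be a pair-orientation invariant on $\mathcal{O}^*$. For each pair $\{x,y\}\subseteq[n]$, I would set $f_{xy}(\tau)\in\{0,1\}$ to indicate whether $x$ precedes $y$ in $\mathcal{O}^*(\tau)$. Repeating the three-case analysis in the proof of Lemma~\ref{lemma:hyperplaneType} for the whole DFS (rather than only the leaves above $\tau$'s internal edge), I would verify that a reflection $Z\cdot\tau$ with $\mathcal{L}(Z)=l$ reverses exactly the contiguous $l$-block inside $\mathcal{O}^*(\tau)$, so $f_{xy}$ flips at step $i$ precisely when $\{x,y\}\subseteq\mathcal{L}(H_i)$. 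Summing over the sequence,
\[ f_{xy}(\tau_m)-f_{xy}(\tau_1)\equiv \sum_{l\in L,\ \{x,y\}\subseteq l} n_l\pmod{2},\]
where $n_l$ counts reflections at the unique hyperplane of type $l$ (uniqueness by Lemma~\ref{lemma:hyperplaneType}). The hypothesis makes every $n_l$ even, so each sum vanishes modulo $2$, every $f_{xy}$ is preserved, and $\mathcal{O}^*(\tau_m)=\mathcal{O}^*(\tau_1)$.

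The three preserved invariants together determine the planar forest: the tree partition fixes the tree-blocks of $\mathcal{O}^*$, the DFS order fixes the within-tree leaf orderings, and $T$ identifies the sub-block above the internal edge inside its tree; in $\widehat{D}_n$ tree ordering is irrelevant, so $\tau_m=\tau_1$ as claimed. The main obstacle I expect is the block-reversal claim for $\mathcal{O}^*$: in the configuration $T\subsetneq l$, one must check that the leaves of $l\setminus T$ sit alongside $T$ as a single contiguous block in $\mathcal{O}^*(\tau)$---they are the $v_2^{(e_1)}$-siblings of the $e_2$-subtree in the ambient $2$-cube---so the whole $l$-block is reversed by the $r_{e_1}$-step.
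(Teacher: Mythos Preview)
Your pair-flip parity argument is exactly the paper's proof, and it is correct; but you have wrapped it in unnecessary machinery that introduces a minor definitional issue.

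An element of $PF_n(1)$ modulo tree permutation is determined entirely by $\mathcal{O}(\tau)$: the unique non-singleton tree is a corolla on these leaves, and every other tree is a single labelled edge. So the paper simply shows $\mathcal{O}(\tau_1)=\mathcal{O}(\tau_m)$ via your parity count restricted to pairs $a,b\in T$, invoking Remark~\ref{rem:reflecting_affects_order} (a reflection of type $l$ reverses the interval $l\cap T$ inside $\mathcal{O}(\tau)$). Your tree-partition invariant is then redundant, since it is $\{T\}$ together with singletons and hence already determined by $T=\mathcal{L}(\tau)$. Your full DFS order $\mathcal{O}^*$ is not well-defined on $\widehat{D}_n$, where forests are identified up to reordering of their trees, so the block-reversal claim for $\mathcal{O}^*$ would require lifting to $D_n$ and checking consistency of lifts along the sequence. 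The ``obstacle'' you anticipate (contiguity of the $l$-block in $\mathcal{O}^*$ when $T\subsetneq l$) is an artefact of tracking $\mathcal{O}^*$ rather than $\mathcal{O}$; it simply does not arise in the paper's formulation, since when $T\subset l$ the reflection reverses all of $\mathcal{O}(\tau)$ and that is all one needs.
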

\begin{proof}
    We know $l=\mathcal{L}(\tau_1) = \mathcal{L}(\tau_m)$ from Lemma~\ref{lemma:hyperplaneType}. To prove $\tau_1=\tau_m$, we must show $\mathcal{O}(\tau_1) = \mathcal{O}(\tau_m)$. In other words, for all $a,b\in l$, $a<b$ in $\mathcal{O}(\tau_1)$ if and only if $a<b$ in $\mathcal{O}(\tau_m)$. 

    Let $l_1,\dots,l_{m-1} $ be the types of $Z_1, \dots, Z_{m-1}$, respectively. The reflection $Z_i \cdot \tau_i$ reverses the (possibly empty) interval $l_i \cap l$ contained in $\mathcal{O}(\tau_i)$ (Remark~\ref{rem:reflecting_affects_order}). For any $a,b \in l$, notice that the number of types $l_i$ that contain both $a,b$ is even. Since each corresponding reflection $Z_i \cdot \tau_i$ reverses the order of $a$ and $b$, the composition of all the reflections results in an even number of reversals of $a,b$. 
    Thus $\mathcal{O}(\tau_m) = Z_{m-1} \cdots  Z_1 \cdot \mathcal{O}(\tau_1)~=~\mathcal{O}(\tau_1)$.

\end{proof}

\begin{lemma}
    Hyperplanes in $M_n$ do not self-osculate.
    \label{lemma:self-osc}
\end{lemma}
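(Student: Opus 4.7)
The plan is to argue by contradiction: suppose some hyperplane $H$ in $M_n$ self-osculates, witnessed by distinct oriented dual 1-cubes $(\tau_1, s_1)$ and $(\tau_m, s_m)$ that share an initial 0-cube. I will show this forces $(\tau_1, s_1) = (\tau_m, s_m)$, a contradiction.

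The first step is a bit of bookkeeping on how 0-cubes and 1-cubes of $M_n$ relate to the parameter $s$. Since $\widehat{D}_n$ has a unique 0-cube and $P$ is a $2^{|L|}$-to-one cover (Lemma~\ref{lemma:covering-map}), the 0-cubes of $M_n$ are indexed by $(\mathbb{Z}/2\mathbb{Z})^L$. Chasing the gluing relations of Definition~\ref{def:cover} at $t=0$ and $t=1$ shows that the 1-cube $(\tau, s)$ has endpoints at the 0-cubes labelled $s$ and $s + 1_{\mathcal{L}(\tau)}$. In particular, since $1_l \neq 0$ in $(\mathbb{Z}/2\mathbb{Z})^L$ for every $l \in L$, no 1-cube of $M_n$ is a loop, and two oriented 1-cubes of a common type $l$ share an initial 0-cube if and only if their $s$-parameters agree. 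By Lemma~\ref{lemma:hyperplaneType} applied to the projections $P(\tau_1, s_1) = \tau_1$ and $P(\tau_m, s_m) = \tau_m$, the two oriented 1-cubes in question have a common type $l := \mathcal{L}(\tau_1) = \mathcal{L}(\tau_m)$, so the shared-initial-0-cube hypothesis yields $s_1 = s_m$.

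Next, I convert the ``dual to the same hyperplane'' hypothesis into a parity condition. Because $(\tau_1, s_1)$ and $(\tau_m, s_m)$ are dual to the same hyperplane $H$, there is a sequence of reflections $(\tau_i, s_i) = Z_{i-1} \cdot (\tau_{i-1}, s_{i-1})$ with each $Z_{i-1}$ a midcube in some hyperplane $H_{i-1}$ of type $l_{i-1} := \mathcal{L}(H_{i-1})$. Using the formula $Z \cdot (\tau, s) = (P(Z) \cdot \tau,\, s + 1_{\mathcal{L}(Z)})$ and telescoping,
$$s_m = s_1 + \sum_{i=1}^{m-1} 1_{l_i}.$$
Combined with $s_1 = s_m$, this forces $\sum_{i=1}^{m-1} 1_{l_i} = 0$; equivalently, each hyperplane of $\widehat{D}_n$ occurs an even number of times in the multiset $\{H_1, \ldots, H_{m-1}\}$.

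Finally, projecting the sequence to $\widehat{D}_n$ via $P$ yields exactly the hypothesis of Lemma~\ref{lemma:self-osc-helper}, so $\tau_1 = \tau_m$. Combined with $s_1 = s_m$, we obtain $(\tau_1, s_1) = (\tau_m, s_m)$, contradicting distinctness. The main obstacle is really the first step: one has to pin down how $s$ parametrises the 0-cube and 1-cube structure of $M_n$ so that ``shared initial 0-cube of two oriented 1-cubes of the same type'' translates cleanly into $s_1 = s_m$. After that translation the proof is a telescoping calculation feeding straight into Lemma~\ref{lemma:self-osc-helper}.
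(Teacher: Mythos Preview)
Your proof is correct and follows essentially the same route as the paper: deduce $s_1 = s_m$ from the shared initial 0-cube, telescope the reflection formula to get the parity condition on the midcube types, and then invoke Lemma~\ref{lemma:self-osc-helper} on the projected sequence in $\widehat{D}_n$ to conclude $\tau_1 = \tau_m$. The only difference is that you spell out in detail why sharing an initial 0-cube forces $s_1 = s_m$ (via the description of endpoints of 1-cubes in $M_n$), whereas the paper asserts this in one sentence.
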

\begin{proof}
    Suppose  $(\tau,s),(\tau',s')$ label oriented 1-cubes dual to a hyperplane $H$ in $M_n$. Suppose they originate from the same 0-cube. Then the parameters must be equal, $s=s'$. Furthermore, there is a sequence of reflections via midcubes $Z_1,\dots,Z_m$ such that $(\tau',s') = Z_m \cdots Z_1 \cdot (\tau,s) = (P(Z_m) \cdots P(Z_1) \cdot \tau , s + 1_{Z_1} + \cdots + 1_{Z_m})$ where $P$ is the covering map. Since $s' = s$, each $1_{Z_i}$ must appear an even number of times in the sum. Consequently, each hyperplane $H_i \in \widehat{D}_n$ containing $P(Z_i)$ must appear an even number of times. By Lemma~\ref{lemma:self-osc-helper}, $P(Z_m) \cdots P(Z_1) \cdot \tau = \tau$. Therefore $(\tau,s) = (\tau',s')$.

\end{proof}

\begin{definition}
    Suppose we have a hyperplane $H$ and a 1-cube $\vec{\epsilon}$ in some cube complex $X$.
    Consider the collection $\{ Z \in H |  Z \cdot \vec{\epsilon}$ is defined $  \}$ of all midcubes $Z$ in $H$ such that $Z \cdot \vec{\epsilon}$ is defined. We say $H\cdot \vec{\epsilon}$ is \tit{valid} if the set is non-empty and \tit{invalid} otherwise.

    In our case, if $X$ is $\widehat{D}_n$ or $M_n$, just knowing the types $\mathcal{L}(H)$ and $\mathcal{L}(\epsilon)$ is not enough to determine if $H \cdot \vec{\epsilon}$ is valid. However, if the types are not nested and not disjoint, $H\cdot \vec{\epsilon}$ must be invalid. We say the reflection $H\cdot \vec{\epsilon}$ is \tit{strongly invalid}. We also say the types are \tit{strongly invalid}. See Figure~\ref{fig:strongly-invalid} for an example.
    \end{definition}

    \begin{figure}[h]
      \centering
      \includegraphics[width=0.3\textwidth]{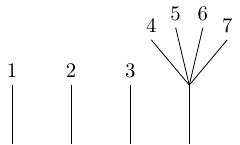}
      \caption{For the 1-cube labelled by the forest above, here are some invalid and strongly invalid types. 
      Invalid types: $\{4,6\}$, $\{4,5,7\}$, $\{1,5,7\}$.
      Strongly invalid types: $\{1,5,7\}$, $\{2,3,4,5\}$, $\{1,2,3,4,5,6\}$.
      }
      \label{fig:strongly-invalid}
  \end{figure}


\begin{lemma}
    Hyperplanes in $M_n$ do not inter-osculate.
    \label{lemma:inter-osc}
\end{lemma}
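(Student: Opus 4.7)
The plan is to adapt the parity argument of Lemma~\ref{lemma:self-osc-helper} so that it relates two hyperplanes simultaneously. Let $(\tau_1,s)$ and $(\tau_2,s)$ be oriented dual 1-cubes to $H_1\neq H_2$ at a common 0-cube, and suppose $H_1,H_2$ intersect in $M_n$. Write $A_i=\mathcal{L}(H_i)$. Since every 2-cube in $\widehat D_n$ (hence in $M_n$) has its dual hyperplane types either nested or disjoint (Figure~\ref{fig:types_of_squares}), so do $A_1$ and $A_2$. Fix a 0-cube corner of some 2-cube dual to both $H_1,H_2$, giving a parameter $s^*$ and dual 1-cubes $(\hat\tau_1,s^*),(\hat\tau_2,s^*)$ which already form a corner.

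For each $i$, connect $(\hat\tau_i,s^*)$ to $(\tau_i,s)$ by a sequence of reflections through midcubes $Z_i^{(j)}$ in hyperplanes of types $B_i^{(j)}\in L$, noting that each $B_i^{(j)}$ must be nested with or disjoint from $A_i$ for the reflection to be valid. The parameter shift $\delta:=s-s^*=\sum_j 1_{B_i^{(j)}}$ is independent of $i$, so its support in $(\mathbb{Z}/2\mathbb{Z})^L$ lies in types simultaneously compatible with $A_1$ and $A_2$; in particular $[\delta]_l=0$ for any $l\in L$ that is strongly invalid with $A_1$ or with $A_2$. By Remark~\ref{rem:reflecting_affects_order}, a reflection of type $B$ reverses the subinterval $A\cap B$ of $\mathcal{O}(\tau)$, swapping $\{a,b\}\subseteq A$ exactly when $\{a,b\}\subseteq B$. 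Hence the parity of order-swaps of $\{a,b\}$ along the $H_i$-sequence equals $\phi_{a,b}(\delta):=\sum_{c\in L,\,\{a,b\}\subseteq c}[\delta]_c\pmod 2$, a function of $\delta$ alone, and the relative order of $a,b$ in $\mathcal{O}(\tau_i)$ equals the relative order in $\mathcal{O}(\hat\tau_i)$ plus $\phi_{a,b}(\delta)$ modulo $2$.

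Assume WLOG the nested case $A_1\subsetneq A_2$ (the disjoint case is analogous). The corner condition at $s^*$ says $\mathcal{O}(\hat\tau_1)$ is a consecutive subinterval of $\mathcal{O}(\hat\tau_2)$. The common shift applied to any $a,b\in A_1$ then yields that $\mathcal{O}(\tau_1)$ and $\mathcal{O}(\tau_2)$ agree on $A_1$. To see that $A_1$ remains a consecutive subinterval of $\mathcal{O}(\tau_2)$, fix $c\in A_2\setminus A_1$ and $a,a'\in A_1$: the difference $\phi_{a,c}(\delta)-\phi_{a',c}(\delta)$ equals $\sum_l [\delta]_l$ over $l\in L$ containing $c$ and exactly one of $a,a'$, and each such $l$ satisfies $\emptyset\neq l\cap A_1\subsetneq A_1$ with $l\not\subseteq A_1$, so $l$ is strongly invalid with $A_1$ and contributes $0$. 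Hence the relative order of $a$ and $c$ in $\mathcal{O}(\tau_2)$ is independent of $a\in A_1$, so $\mathcal{O}(\tau_1)$ is a consecutive subinterval of $\mathcal{O}(\tau_2)$ and $(\tau_1,s),(\tau_2,s)$ form a corner of a 2-cube.

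The main technical obstacle is the disjoint case, where forming a corner additionally requires the $A_2$-leaves to sit consecutively at a common vertex inside the forest structure of $\tau_1$ (and symmetrically). A parallel parity argument handles this: reflections preserve vertex groupings, and the bicompatibility of the support of $\delta$ controls the reversals acting on the ordering at the vertex hosting those leaves, so consecutivity and the correct internal ordering are both transported from $(\hat\tau_1,\hat\tau_2)$ to $(\tau_1,\tau_2)$.
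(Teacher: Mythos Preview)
Your argument for the nested case $A_1\subsetneq A_2$ is correct and is genuinely different from the paper's. The paper proceeds constructively: starting from the crossing 2-cube it carries along a second sequence $(\rho_i,t_i)$ of 1-cubes dual to $H_2$, built inductively by choosing midcubes $Y_i$ of type $l_i'=l_i\cap\mathcal L(H_2)$ (with a case split when $l_i'$ degenerates to a singleton), and at the end invokes Lemma~\ref{lemma:self-osc} to identify $\rho_q$ with $\rho$. Your route is cleaner: you observe that the swap parity of any pair $\{a,b\}$ along an $H_i$-gallery is the linear functional $\phi_{a,b}(\delta)=\sum_{l\ni a,b}[\delta]_l$ of the parameter shift $\delta=s-s^*$, hence path- and $i$-independent; together with the fact that $[\delta]_l=0$ for every $l$ strongly invalid with $A_1$ (because $\delta$ can also be written as a sum of types valid for $A_1$), this gives both $\mathcal O(\tau_1)=\mathcal O(\tau_2)|_{A_1}$ and consecutivity of $A_1$ in $\mathcal O(\tau_2)$ directly, with no inductive bookkeeping.

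Your treatment of the disjoint case, however, is muddled. You call it ``the main technical obstacle'' and speak of a condition that ``the $A_2$-leaves sit consecutively at a common vertex inside the forest structure of $\tau_1$'', but no such condition exists: a forest $\tau_1\in PF_n(1)$ of type $A_1$ has \emph{every} leaf outside $A_1$ in its own one-edge tree, so when $A_1\cap A_2=\varnothing$ one can always form the 2-subcube $\sigma$ consisting of two disjoint fans on $A_1$ and $A_2$, giving $d_{e_2}(\sigma)=\tau_1$ and $d_{e_1}(\sigma)=\tau_2$. The paper dispatches this case in one line via Remark~\ref{rem:fund-gp-types-squares}. This confusion does not create a gap in your proof --- the case you thought was hard is in fact immediate --- but you should replace your last paragraph with that one-line observation.
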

\begin{proof}
    We begin with two intersecting hyperplanes $H_1,H_2$ in $M_n$. 
    There exists a 2-subcube labelled by $(\sigma, s)$ with internal edges $E(\sigma) = \{e_1, e_2\}$ such that $H_1$ and $H_2$ are dual to the oriented 1-cube $(d_{e_2}(\sigma), s)$ and $(d_{e_1}(\sigma), s)$. 

    Case 1: Types  $\mathcal{L}(\sigma, e_i)$ are disjoint.

    Take arbitrary dual oriented 1-cubes $(\tau, s'), (\rho,s')$ of $H_1, H_2$, respectively, that originate from the same 0-cube. By Lemma~\ref{lemma:hyperplaneType}, $\rho$ and $\tau$ will still have disjoint types. In the base space $\widehat{D}_n$,  $\rho$ and $\tau$ label 1-cubes of a corner of a 2-subcube (Remark ~\ref{rem:fund-gp-types-squares}). The lift of this 2-subcube where the parameter is $s'$ demonstrates that  $(\rho,s'),(\tau, s')$ also form a corner of a 2-subcube. So it is impossible for these hyperplanes to inter-osculate. This kind of inter-osculation is impossible for $\widehat{D}_n$ as well.

    Case 2: Types $\mathcal{L}(\sigma, e_i)$ are nested. Suppose the internal edge $e_1$ is below $e_2$ in $\sigma$ as in Figure~\ref{fig:io-case2-im1}. 

    \begin{figure}[h]
        \centering
        \includegraphics[width=0.4\textwidth]{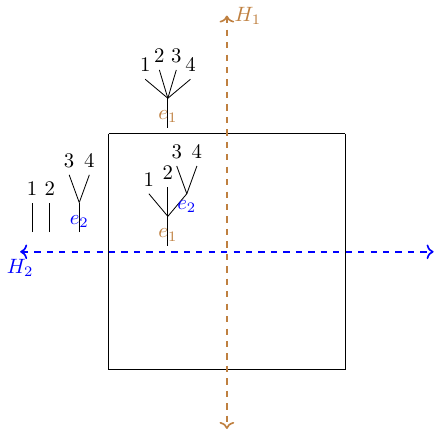}
        \caption{}
        \label{fig:io-case2-im1}
    \end{figure}

    Let   $(\tau_1, s_1) = (d_{e_2}(\sigma), s)$ 
    and $(\rho_1, t_1) = (d_{e_1} (\sigma), s)$. Once again, take arbitrary $(\tau, s'), (\rho,s')$ dual to $H_1, H_2$, respectively, such that they originate from the same 0-cube. We will prove that $\mathcal{O}(\rho)$ embeds as an interval into $\mathcal{O}(\tau)$. Then by Remark~\ref{rem:fund-gp-types-squares}, the oriented edges $\rho, \tau$ form a corner of a 2-subcube. The remaining argument is the same as case 1.
    
    There is some sequence of reflections taking $(\rho_1,s)$ to $(\rho',s')$. By Remark~\ref{rem:reflecting_affects_order}, none of the reflecting midcubes can have a type that is strongly invalid with $\mathcal{L}(H_2)$. The types of these midcubes modify the parameter, thus if $l\in L$ is strongly invalid with $\mathcal{L}(H_2)$, the corresponding coordinate in $s$ and $s'$ must be the same. 

    There is also a sequence of midcubes $Z_1,\dots,Z_{m-1}$ in $M_n$ such that the sequence of reflections $(\tau_{i}, s_{i}) = Z_{i-1} \cdot (\tau_{i-1}, s_{i-1})$ is defined and $(\tau_m, s_m) = (\tau, s')$. Let $l_i = \mathcal{L}(Z_i)$ be the type of the midcubes. Each type $l_i$ that is strongly invalid with the type $\mathcal{H_2}$ must appear an even number of times by the previous argument.
    We first show $\mathcal{L}(\rho) = \mathcal{L}(H_2)$ is an interval in $\mathcal{O}(\tau)$, then show $\mathcal{O}(\rho)$ is embedded in $\mathcal{O}(\tau)$.

    \begin{figure}[h]
        \centering
        \includegraphics[width=0.8\textwidth]{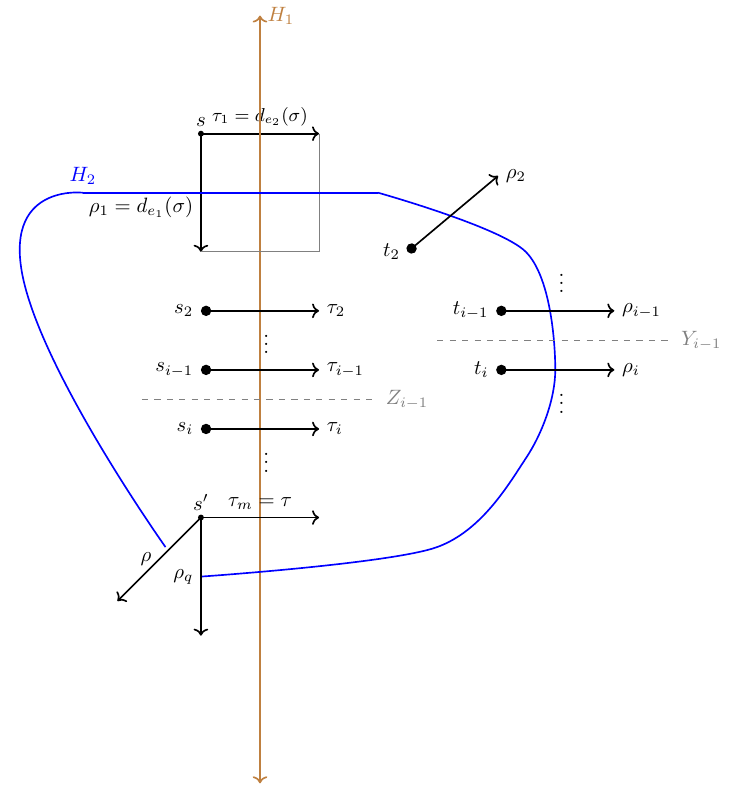}
        \caption{}
        \label{fig:io-ladder}
    \end{figure}



Let $x \in \mathcal{L}(H_1) - \mathcal{L}(H_2)$. 
For each forest $(\tau_i,s_i)$, $x$ partitions $\mathcal{L}(H_2)$ into two sets 
$A_i = \{ y \in \mathcal{L}(H_2) \mid y < x $ in the ordering $ \mathcal{O}(\tau_i) \}$ 
and 
$B_i = \{ y \in \mathcal{L}(H_2) \mid y > x $ in the ordering~$ \mathcal{O}(\tau_i) \}$. 
Suppose initially $A_1 = \mathcal{L}(H_2)$ and $B_1=\varnothing$ and $a,b\in A_1$.
If the type $\mathcal{L}(Z_1)$ contains $\{a,x\}$ but not $b$ or  $\{b,x\}$ but not $a$, then the partition given by $\tau_2$ places $a,b$ into disjoint sets. Also note that the types $\mathcal{L}(Z_1)$ and $\mathcal{L}(H_2)$ are strongly invalid. Consider all $Z_i$ such that $\mathcal{L}(Z_i)$ contains $\{a,x\}$ but not $b$ or  $\{b,x\}$ but not $a$. The corresponding $\tau_{i+1}$ will partition $a,b$ into disjoint subsets if they where previously in the same subset and vice versa. Each strongly invalid reflection occurs an even number of times so $a,b$ will lie in the same set in the final partition. Thus the unordered set $ \mathcal{L}(\rho)$ forms an interval in $\mathcal{O}(\tau)$. It remains to show that the ordered set $\mathcal{O}(\rho)$ is embedded in $\mathcal{O}(\tau)$.

    Define a new sequence from the types $l_i$ as follows. 
\[ l_i' =   \left\{
\begin{array}{ll}
      l_i & \text{if } l_i, \mathcal{L}(H_2)  \text{ are not strongly invalid} \\
      l_i \cap \mathcal{L}(H_2) & \text{if } l_i, \mathcal{L}(H_2) \text{ are strongly invalid} \\
\end{array} 
\right. \]

Notice that $l_i'$ cannot be empty. We claim that there is a strictly increasing function $\mathcal{I}:[q] \to [m]$ such that $\mathcal{I}(q)=m$ and a sequence $(\rho_1, t_1) = (d_{e_1} (\sigma), s), (\rho_2, t_2), \dots, (\rho_q, t_q) = (\rho, s')$ of 1-cubes dual to $H_2$ such that the ordered set $\mathcal{O}(\rho_i)$ embeds into $\mathcal{O}(\tau_{\mathcal{I}(i)})$ (not necessarily as an interval). The lemma immediately follows from this claim.

Assume the inductive hypothesis: for some $i$, $\mathcal{O}(\rho_i)$ embeds into $\mathcal{O}(\tau_{\mathcal{I}(i)})$. 

\begin{figure}[h]
    \centering
    \includegraphics[width=0.8\textwidth]{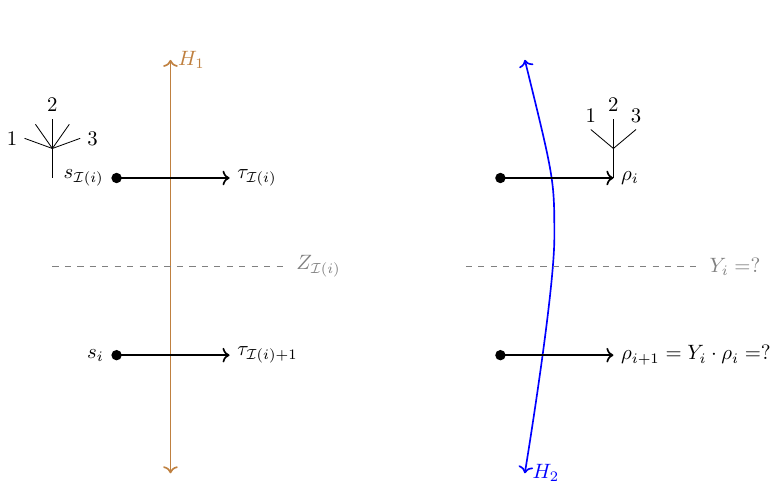}
    \caption{}
    \label{fig:io-next-rho}
\end{figure}

The reflection by the midcube $Z_{\mathcal{I}(i)}$ reverses the interval $l_{\mathcal{I}(i)}$ in the ordered set $\mathcal{O}(\tau_{\mathcal{I}(i)})$. By our inductive hypothesis, $l_{\mathcal{I}(i)}'$ is either an interval (possibly a singleton) in, or is disjoint from, $\mathcal{O}(\rho_i)$. 

If $l_{\mathcal{I}(i)}'$ is not a singleton, extend the definition $\mathcal{I}(i+1) = \mathcal{I}(i) + 1$. By Remark~\ref{rem:reflecting_affects_order}, there exists a midcube $Y_i$ of type $\mathcal{L}(Y_i) = l_{\mathcal{I}(i)}'$ such that $Y_i \cdot (\rho_i, s_i)$ is defined. We let $(\rho_{i+1}, s_{i+1})$ be the result of this reflection.

If $l_{\mathcal{I}(i)}'$ is a singleton, no midcube will have this type, so consider instead the next $l_{\mathcal{I}(i)+1}'$. Suppose $j$ is the smallest natural number such that $l_{\mathcal{I}(i)+j}'$ is not a singleton. Extend the definition $\mathcal{I}(i+1) = \mathcal{I}(i) + j+1$, pick a midcube $Y_i$ of type $\mathcal{L}(Y_i) = l_{\mathcal{I}(i)+j}'$ such that $  Y_i \cdot (\rho_i, s_i)$ is defined and $(\rho_{i+1}, s_{i+1})$ is the result of the reflection.

We check that the inductive property holds. The sequence of reflections $Z_{\mathcal{I}(i) + j} \cdots Z_{\mathcal{I}(i)} \cdot \tau_{\mathcal{I}(i)} = \tau_{\mathcal{I}(i) + j+1}$ changes the ordered set $\mathcal{O}(\tau_{\mathcal{I}(i)})$. 
Among all these midcubes, the reflection about $Z_{\mathcal{I}(i) + j}$ is the only one that can affect the suborder $\mathcal{O}(\rho_i)$ (the types of all other midcubes intersect with $\mathcal{O}(\rho_i)$ to give a singleton).
The reflection about $Z_{\mathcal{I}(i) + j}$  reverses $l'_{\mathcal{I}(i)+j} \cap \mathcal{L}(H_2)$ in this suborder. Since $\mathcal{L}(Y_i) = l'_{\mathcal{I}(i)+j}$, this is precisely what the reflection $Y_i \cdot (\rho_i,s_i)$ does to $\mathcal{O}(\rho_i)$. Thus the inductive property holds.

If such a value $j$ does not exist, redefine $\mathcal{I}(i) =m$, terminate the process and let $q = i$. We have $\displaystyle t_q = s + \sum_{i=1}^{q-1} 1_{Y_i}$  and  $ \displaystyle s' = s + \sum_{i=1}^{m-1} 1_{Z_i} $. Notice 
$$t_q - s' =  \displaystyle \sum_{i \in \mathcal{Y}}  1_{l'_i} - \sum_{i \in \mathcal{Z}} 1_{l_i}$$
where 
$\mathcal{Z} = \{i \mid l_i \text{ and } \mathcal{L}(H_2)  \textrm{ are strongly invalid}  \}$ and
$\mathcal{Y} = \{i \mid l_i \text{ and } \mathcal{L}(H_2)$   are strongly invalid and $l_{i}'$ has at least 2 elements$ \}$. Since each strongly invalid type appears an even number of times, $t_q - s' = 0 \in (\Z/2\Z)^L$. The two 1-cubes $(\rho_q, s'), (\rho, s')$ originate from the same 0-cube and are both dual to $H_2$. By Lemma~\ref{lemma:self-osc} and Corollary~\ref{cor:selfintersection}, we have equality~$\rho_q =\rho$.

\end{proof}

The main results Theorem~\ref{thm:Mn-special} and Corollary~\ref{cor:vcn-vs} follow from Lemma~\ref{lemma:dn_selfintersection}, Lemma~\ref{lemma:two-sided}, Lemma~\ref{lemma:self-osc} and Lemma~\ref{lemma:inter-osc}. 

\smallskip
\textbf{Acknowledgements:} I would like to thank my supervisors Joel Kamnitzer and Piotr Przytycki for proposing this project, for their patients and for guiding me over the past 2 years. I would like to thank Piotr for proofreading countless versions of this article and providing meaningful feedback every time. Lastly, I would like to thank my family for always being understanding and for always being by my side in the face of adversities.

\bibliography{vcn_vs}

\end{document}